\theoremstyle{plain}
\newtheorem{maintheorem}{Main Theorem}
\newtheorem{theorem}{Theorem}[section]
\newtheorem{lemma}[theorem]{Lemma}
\newtheorem{proposition}[theorem]{Proposition}
\newtheorem{corollary}[theorem]{Corollary}
\newtheorem{rmk}[theorem]{Remark}
\newtheorem{example}[theorem]{Example}
\theoremstyle{remark}
\mathchardef\emptyset="001F
\numberwithin{equation}{section}
\newcommand{\op}[1]{{\rm{#1}}}
\newcommand{\norm}[1]{\left\|{#1}\right\|}
\newcommand{\de}{\partial}
\newcommand{\e}{\varepsilon}
\newcommand{\R}[1]{{\mathbb R^{#1}}}
\title[Integrability result for $L^p$-vectorfields]
{An integrability result for $L^p$-vectorfields in the plane}
\author[Mircea Petrache]{Mircea Petrache}
\begin{document}

\begin{abstract}
 We prove that if $p>1$ then the divergence of a $L^p$-vectorfield $V$ on a $2$-dimensional domain $\Omega$ is the boundary of an integral $1$-current, if and only if $V$ can be represented as the rotated gradient $\nabla^\perp u$ for a $W^{1,p}$-map $u:\Omega\to S^1$. Such result extends to exponents $p>1$ the result on distributional Jacobians of Alberti, Baldo, Orlandi \cite{ABO}.
\end{abstract}
 \maketitle
\section{Introduction}
Consider a vectorfield $V\in L^p(B^2,\mathbb R^2)$. If $\op{div}V=0$ then by the Poincar\'e Lemma we know that there exists a $W^{1,p}$-function $\psi$ with 
\begin{equation}\label{reploc}
V=\nabla^\perp \psi. 
\end{equation}
The next case in which the situation is relatively standard, is when (in the sense of distributions)
\begin{equation}\label{divdirac}
\op{div} V=2\pi\sum_{i=1}^N n_i\delta_{x_i},\quad \text{for some }n_i\in \mathbb Z\setminus \{0\}\text{ and }x_i\in B^2.
\end{equation}
In this case we cannot have $V\in L^p$ unless $p<2$ (consider the model case $V(x)=\tfrac{x}{|x|^2}$, corresponding to $N=1, x_1=(0,0), n_1=1$ in \eqref{divdirac}). The representation \eqref{reploc} holds then just locally outside the points $x_i$, and the local representations do not lift to a global one. If $p>1$ then we obtain that the function $\psi$ is locally harmonic and $V$ is locally holomorphic. Therefore, it is possible to find a representation of the form \eqref{reploc} for a function $\psi\in W^{1,p}(B^2, \mathbb R/2\pi\mathbb Z)$, by taking $u=\op{Arg(V)}+C$ for any constant $C$. Equivalently, one could use the Green function for the laplacian to obtain a harmonic solution of $\nabla g=V$, and then from the regularity of $g$ the existence of $\psi$ would follow.\\
If we now consider the preimage $u^{-1}(y)$ of any regular value $y\in \mathbb R/2\pi\mathbb Z$ of $u$, then we see by Sard's theorem that this will be a rectifiable set, and with the orientation corresponding to the vectorfield $\nabla g$, we can also consider this set as an integral current $I_u$ on $\bar B^2$. The boundary of this current is precisely the sum of Dirac masses in \eqref{divdirac} (without the ``$2\pi$'' factor):
\begin{equation}\label{currentrep}
 \de I_u\llcorner B^2 = \sum_{i=1}^Nn_i\delta_{x_i} = \frac{1}{2\pi}\op{div}V.
\end{equation}
When passing to the case where we allow $N=\infty$ in \eqref{divdirac}, we have to face the new difficulty that not all the formal infinite sums of Dirac masses can be represented as the distributional divergence of an $L^p$-vectorfield. The most obvious restriction (depending on the Fubini theorem) is seen as follows. Let $\Sigma$ be a closed smooth Jordan curve and consider its perturbations $\Sigma(t), t\in[-\e,\e]$ via a family of diffeomorphisms. Then the flux $f(t)$ of $V$ through $\Sigma(t)$ should satisfy again $f\in L^p([-\e,\e])$. In particular, it cannot happen that the algebraic sum of the Dirac masses inside $\Sigma$ stays infinite for a set of times $t$ of positive measure. \\
If we assume for a moment that a rectifiable $1$-current $I$ as in \eqref{currentrep} exists, the above condition would translate by saying that the mass of the slice of $I$ along $\Sigma(t)$ is a $L^p$-function of $t$. In this work we prove a \textit{necessary and sufficient condition} for a representability property like \eqref{reploc} to hold. Consider a smooth domain $\Omega\subset \mathbb R^2$ or $\Omega= S^2\simeq \mathbb C\cup\{\infty\}$. Our main result is then:
\begin{maintheorem}[first version]\label{kessel}
 Suppose we have a vector field $V\in L^p(\Omega,\R{2})$ with $p>1$, whose divergence can be represented by the boundary of an integral $1$-current $I$ on $\Omega$, i.e.
\begin{equation}\label{divcurr1}
\frac{1}{2\pi}\int V\cdot\nabla\phi =\langle I, d\phi\rangle\quad \forall\phi\in C^\infty_c(\Omega).
\end{equation}
Then there exists a $W^{1,p}$-function $u:\Omega\to \mathbb R/2\pi\mathbb Z$ such that $V=\nabla^\perp u$ and $u|_{\de \Omega}$ has zero degree. Viceversa, for any $u\in W^{1,p}(\Omega, \mathbb R/2\pi\mathbb Z)$ with $\op{deg}(u|_{\de\Omega})=0$, the vector field $\nabla^\perp u$ belongs to $L^p$ and has divergence equal to the bounday of a current in $\mathcal I_1(\Omega)$, in the sense of \eqref{divcurr1}.
\end{maintheorem}
The zero degree condition on $\de\Omega$ in the above theorem can be removed in the following way. Consider a $L^p$-vectorfield $V$ such that 
\begin{equation}\label{eqanydegree}
 \frac{1}{2\pi}\op{div}V=\de I+\sum_{i=1}^N n_i\delta_{x_i} \quad \text{for some }n_i\in \mathbb Z\setminus \{0\}\text{ and }x_i\in \Omega.
\end{equation}
Then we can find, via the Green function method sketched in the introduction, a vectorfield $V'$ satisfying \eqref{divdirac} and a function $\psi'\in W^{1,p}(\Omega, \mathbb R/2\pi\mathbb Z)$ satisfying \eqref{reploc}, with
\begin{align*}
 \op{deg}(\psi'|_{\de\Omega})=\sum_{i=1}^Nn_i\\
  \frac{1}{2\pi}\op{div}(V-V')=\de I,
\end{align*}
and we can apply the Main Theorem to $V-V'$ obtaining a function $\psi\in W^{1,p}(\Omega, \mathbb R/2\pi\mathbb Z)$ with degree zero on $\de\Omega$ and which satisfies $\nabla^\perp\psi=V-V'$. Then $\psi+\psi'$ will satisfy 
\begin{align*}
 \nabla^\perp(\psi+\psi')=V\\
 \op{deg}((\psi+\psi')|_{\de\Omega})=\sum_{i=1}^Nn_i.
\end{align*}
With this construction we obtain the following generalization
\begin{corollary}\label{anydegree}
 Suppose we have a $L^p$-vector field $V$ satisfying \eqref{eqanydegree}. Then there exists a $W^{1,p}$-function $u:\Omega\to \mathbb R/2\pi\mathbb Z$ such that $V=\nabla^\perp u$ and $u|_{\de \Omega}$ has degree $\sum_{i=1}^Nn_i$. Viceversa, for any $u\in W^{1,p}(\Omega, \mathbb R/2\pi\mathbb Z)$ with $\op{deg}(u|_{\de\Omega})=d\in\mathbb Z$, the vector field $\nabla^\perp u$ belongs to $L^p$ and satisfies \eqref{eqanydegree}, where $d=\sum n_i$.
\end{corollary}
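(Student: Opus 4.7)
The argument unfolds the reduction sketched in the paragraphs just before the statement: split $V = V' + (V-V')$ so that $V'$ absorbs the singular Dirac part of the divergence, apply the Main Theorem to the remainder, and recombine. My plan is to make each step precise.

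First, I would construct $V'$ via the Green function. Let $G(\cdot,y)$ denote the (say, Dirichlet) Green function of $-\Delta$ on $\Omega$ and set $g(x) := 2\pi \sum_{i=1}^{N} n_i\, G(x, x_i)$, which satisfies $-\Delta g = 2\pi \sum_i n_i \delta_{x_i}$. Locally away from the punctures $g$ admits a harmonic conjugate whose multivaluedness around each $x_i$ has period exactly $2\pi n_i$, so that it descends to a single-valued map $\psi' \colon \Omega \to \mathbb R/2\pi\mathbb Z$. Its gradient behaves like $|x-x_i|^{-1}$ near each $x_i$ and is therefore in $L^p$ precisely because $p<2$, a restriction forced anyway by the hypothesis $V \in L^p$ whenever $\sum n_i \neq 0$. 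Setting $V' := \nabla^\perp \psi'$ one gets $V' \in L^p$, $\tfrac{1}{2\pi}\op{div} V' = \sum_i n_i \delta_{x_i}$ distributionally, and a residue / argument-principle computation yields $\op{deg}(\psi'|_{\partial \Omega}) = \sum_i n_i$.

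Next, the remainder $\tilde V := V - V'$ lies in $L^p$ and satisfies $\tfrac{1}{2\pi}\op{div} \tilde V = \partial I$, so the Main Theorem supplies $\psi \in W^{1,p}(\Omega, \mathbb R/2\pi\mathbb Z)$ with $\nabla^\perp \psi = \tilde V$ and $\op{deg}(\psi|_{\partial \Omega}) = 0$. Defining $u := \psi + \psi' \pmod{2\pi}$ gives $\nabla^\perp u = V$, and additivity of the degree on $\partial \Omega$ yields $\op{deg}(u|_{\partial \Omega}) = \sum_i n_i$, proving the forward implication. The converse uses the same trick in reverse: given $u \in W^{1,p}(\Omega, \mathbb R/2\pi\mathbb Z)$ of boundary degree $d$, pick any $x_0 \in \Omega$ and a model $\psi_0$ with $\op{div}(\nabla^\perp \psi_0) = 2\pi d\, \delta_{x_0}$ and degree $d$ on $\partial \Omega$; applying the converse part of the Main Theorem to $u - \psi_0$ and rearranging yields \eqref{eqanydegree} with a single Dirac of weight $d$ at $x_0$.

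The main obstacle is the first step: verifying that the harmonic-conjugate construction descends to a single-valued $\mathbb R/2\pi\mathbb Z$-map of class $W^{1,p}$ and has the claimed boundary degree. The case $\Omega = S^2$ requires separate bookkeeping since there is no boundary, but the reduction still works once one observes that on the sphere the global residue balance forces $\sum n_i = 0$, which makes the degree-matching condition vacuous. Once $V'$ and $\psi'$ are in place with the correct regularity and degree, the remainder is a direct invocation of the Main Theorem together with the additivity of the $\mathbb R/2\pi\mathbb Z$-valued primitive.
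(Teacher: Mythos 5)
Your proposal follows essentially the same route as the paper: subtract a Green-function/harmonic-conjugate field $V'=\nabla^\perp\psi'$ carrying the Dirac part with $\op{deg}(\psi'|_{\de\Omega})=\sum n_i$, apply the Main Theorem to $V-V'$, and add the resulting $\psi$ to $\psi'$, using additivity of the degree; your treatment of the converse (subtracting a single model singularity of weight $d$) is the same reduction run backwards and is consistent with the paper's construction. No substantive differences or gaps to report.
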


In the case $p=1$, a result similar to the Main Theorem above is a subcase of the result of \cite{ABO}. An equivalent statement of such result is (see also Section \ref{notations} where different notations are proposed):
\begin{proposition}[\cite{ABO}]\label{app41}
 For each integral $1$-current $I$ of finite mass on $\Omega$ there exists a map $\psi\in W^{1,1}(\Omega, \mathbb R/2\pi\mathbb Z)$ such that (in the sense of distributions)
$$
\de I= \frac{1}{2\pi}\op{div}(\nabla^\perp \psi).
$$
\end{proposition}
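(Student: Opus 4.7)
The plan is to reduce Proposition~\ref{app41} to the explicit construction of a map $\psi\in W^{1,1}(\Om,\mathbb R/2\pi\mathbb Z)$ whose rotated-gradient divergence prescribes a given integer combination of Dirac masses. Since $I\in\mathcal I_1(\Om)$, both $I$ and $\de I$ are by definition rectifiable of finite mass, which forces $\de I$ to be a finite integer combination
\[
\de I \;=\; \sum_{i=1}^N n_i\,\delta_{x_i},\qquad n_i\in\mathbb Z\setminus\{0\},\quad x_i\in\overline{\Om},\quad N<\infty.
\]
Thus what has to be built depends only on this finite list of weighted points and not on the particular rectifiable trajectory of $I$.

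The construction I would use is classical. Set
\[
\psi(z) \;:=\; -\sum_{i=1}^N n_i\,\arg(z-x_i)\pmod{2\pi}.
\]
Each individual summand is multivalued on $\Om\setminus\{x_i\}$, but the total descends unambiguously to a smooth map $\Om\setminus\{x_1,\dots,x_N\}\to\mathbb R/2\pi\mathbb Z$ because $n_i\in\mathbb Z$. The pointwise bound $|\nabla\psi(z)|\le\sum_i |n_i|/|z-x_i|$ is integrable on bounded $\Om$, giving $\psi\in W^{1,1}(\Om,\mathbb R/2\pi\mathbb Z)$.

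To identify $\tfrac{1}{2\pi}\op{div}\nabla^\perp\psi$ as a distribution on $\Om$, I would use the pointwise identity $\nabla^\perp\arg(z-a)=-(z-a)/|z-a|^2$ together with the classical relation $\op{div}((z-a)/|z-a|^2)=2\pi\,\delta_a$. Linearity then gives
\[
\tfrac{1}{2\pi}\,\op{div}\nabla^\perp\psi \;=\; \sum_{i=1}^N n_i\,\delta_{x_i}\;=\;\de I,
\]
as required. The one piece of bookkeeping is the case $\Om=S^2$, where $\arg(z-x_i)$ depends on a chart around $\infty$; however, pairing $\de I$ with the constant function $1$ forces $\sum_i n_i=0$ on the closed surface, and this cancellation removes the residual winding at infinity.

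Beyond this check I do not expect serious obstacles: the argument reduces to a direct computation with logarithmic potentials, and the $\psi$ produced depends only on $\de I$ rather than on the finer rectifiable structure of $I$. The substantively hard direction---producing an integral current $I$ out of a generic $\psi\in W^{1,1}(\Om,\mathbb R/2\pi\mathbb Z)$, which is really the heart of \cite{ABO} and is foreshadowed in the introduction via the slicing/Fubini argument---is not needed for the Proposition as phrased.
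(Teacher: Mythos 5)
Your opening reduction is where the argument breaks. You claim that $I\in\mathcal I_1(\Omega)$ forces $\de I$ to be a \emph{finite} integer combination of Dirac masses, but in this paper $\mathcal I_1(\Omega)$ means integer-multiplicity rectifiable $1$-currents of finite mass: only $\mathbf M(I)<\infty$ is assumed, and no finiteness is imposed on the mass of $\de I$. The boundary can therefore fail to be a measure at all. The paper's Example \ref{diffalberti} is exactly such a case: $I=\sum_{i=1}^\infty [x_i^-,x_i^+]$ with $\sum_i a_i<\infty$ has finite mass, while $\de I=\sum_i(\delta_{x_i^+}-\delta_{x_i^-})$ is an infinite sum of dipoles with infinite total variation. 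The remark following the Proposition makes clear that these currents are precisely within the scope of the $p=1$ statement (they are excluded only for $p>1$); if the Proposition concerned only finite sums of Diracs, the contrast the paper draws there, and the whole reason for imposing the extra $L^p$-integrability hypothesis in the Main Theorem, would be vacuous. So the step ``what has to be built depends only on this finite list of weighted points'' is not available, and your explicit $\psi(z)=-\sum_i n_i\arg(z-x_i)$ only settles the easy finite-singularity case already discussed around \eqref{divdirac}--\eqref{currentrep}.

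For the general case one cannot even define $\psi$ from $\de I$ alone by a convergent series of $\arg$-potentials, since each unit Dirac contributes a fixed amount of $W^{1,1}$-energy and $\de I$ may carry infinitely many of them; the construction must use the current $I$ itself. The argument of \cite{ABO} (which the paper cites rather than reproves) proceeds by approximating $I$ by polygonal/rectifiable pieces and attaching to each piece a ``dipole'' map whose distributional Jacobian is its boundary, with the key estimate $\|\nabla\psi_i\|_{L^1}\le C\,\mathbf M(I_i)$, so that finiteness of $\mathbf M(I)$ controls the $W^{1,1}$-norm of the sum and one can pass to the limit. It is exactly this linear-in-length scaling of the $L^1$-norm that distinguishes $p=1$ from $p>1$, where the cost per dipole scales like $a^{2-p}$ as in \eqref{lpnormestimate}. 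Your proposal, by depending only on $\de I$ and assuming it finite, misses this mechanism, which is the actual content of the Proposition.
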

The distribution $\op{div}(\nabla^\perp \psi)$ is called \emph{distributional Jacobian of $\psi$}.
\begin{rmk}
 As seen in Example \ref{diffalberti}, for $p>1$, unlike the case $p=1$, a large subclass of the boundaries of integral currents is \emph{not} realized as a distributional Jacobian of any map in $W^{1,p}(B^2, S^1)$, therefore we must ask for a higher integrability condition for the current $I$: this is why the existence of the $L^p$-vectorfield $V$ is imposed.
\end{rmk}

\subsection{Different formulations of the Main Theorem}\label{notations}

We have at least three ways of looking at the manifold $S^1$, namely:
\begin{enumerate}
\item as a submanifold of $\mathbb R^2$: $S^1=\{(x,y)\in\mathbb R^2:\;x^2+y^2=1\}$,
\item via a parametrization: $S^1=\{(\cos(t), \sin(t)):\;t\in\mathbb R\}$,
\item as a quotient: $S^1=\mathbb R/2\pi\mathbb Z$.
\end{enumerate}
When considering $W^{1,p}$-maps on $B^2$ with values in $S^1$, these three points of view lead to three possible spaces:
\begin{enumerate}
\item $W_1=\{u\in W^{1,p}(B^2, \mathbb R^2):\;u_1^2(x)+u_2^2(x)=1,\text{ a. e. }x\in B^2\}$, which is just the usual definition of $W^{1,p}(B^2, S^1)$,
\item $W_2=\{(\cos(\psi), \sin(\psi)):\;\psi\in W^{1,p}(B^2,\mathbb R)\}$,
\item $W_3=\{u\in W^{1,p}(B^2,\mathbb R)\}/\sim$, where $u_1\sim u_2$ if $u_1-u_2$ is a measurable map with values on $2\pi\mathbb Z$ a.e. We denote this space by $W^{1,p}(B^2,\mathbb R/2\pi\mathbb Z)$.
\end{enumerate}
$W_1$ is isomorphic as a (topological vector space) to $W_3$ via the diffeomorphism $\phi:\mathbb R/2\pi\mathbb Z\to S^1,\; t\mapsto (\cos(t), \sin(t))$. Instead, the space $W_2$ is different from $W_1, W_3$ because of the following result:
\begin{theorem}[\cite{Demengel}]\label{dem}
 If $1\leq p <2$ and $u\in W^{1,p}(B^n,S^1)$ then the following statements are equivalent:
\begin{itemize}
 \item $u$ can be strongly approximated by smooth maps $u_k\in C^\infty(B^n, S^1)$
 \item $d(u^*\theta)=0$ in the sense of distributions
 \item There exists $\tilde u\in W^{1,p}(B^n,\mathbb R)$ such that $u=(\cos(\tilde u), \sin(\tilde u))$.
\end{itemize}
\end{theorem}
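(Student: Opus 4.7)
The strategy is to establish the cycle (iii) $\Rightarrow$ (i) $\Rightarrow$ (ii) $\Rightarrow$ (iii), concentrating the work on the last arrow. For (iii) $\Rightarrow$ (i), pick $\tilde u_k\in C^\infty(B^n)\cap W^{1,p}$ with $\tilde u_k\to\tilde u$ in $W^{1,p}$, set $u_k:=(\cos\tilde u_k,\sin\tilde u_k)\in C^\infty(B^n,S^1)$, and deduce $u_k\to u$ strongly in $W^{1,p}$ from the chain rule together with dominated convergence, using that $\sin$ and $\cos$ are $1$-Lipschitz and bounded. For (i) $\Rightarrow$ (ii), write $u_k^*\theta=(u_k)_1\,d(u_k)_2-(u_k)_2\,d(u_k)_1$, observe that this bilinear expression is continuous on $L^\infty\times L^p$ and distributionally closed (since $d\theta=0$ on the $1$-manifold $S^1$), and pass to the limit using $|u_k|=1$ together with the $W^{1,p}$-convergence $u_k\to u$; distributional exterior differentiation then preserves the vanishing.

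The core of the argument is (ii) $\Rightarrow$ (iii). The $1$-form $\omega:=u^*\theta = u_1\,du_2 - u_2\,du_1$ belongs to $L^p(B^n,\Lambda^1)$ because $u$ is bounded, and is distributionally closed by hypothesis. On the simply-connected star-shaped domain $B^n$, a Poincar\'e lemma for $L^p$-forms --- obtained, for instance, via the homotopy operator $f(x):=\int_0^1\omega(tx)(x)\,dt$ combined with a Poincar\'e-Sobolev inequality to verify $f\in L^p$ --- produces $f\in W^{1,p}(B^n,\R{})$ with $df=\omega$. Identifying $u$ with the complex-valued map $u_1+iu_2$, set $v:=u\,e^{-if}$ and show that $v$ is a.e. constant. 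Differentiating $u\bar u=1$ gives $u_1\,du_1+u_2\,du_2=0$ a.e., hence $\bar u\,du=i\omega=i\,df$ and therefore $du=iu\,df$ a.e.; the $W^{1,p}$ chain and product rules then yield
\[
dv=e^{-if}(du-iu\,df)=0.
\]
Since $B^n$ is connected, $v$ equals a constant of unit modulus $e^{i\alpha_0}$ a.e., and $\tilde u:=f+\alpha_0\in W^{1,p}(B^n,\R{})$ satisfies $u=(\cos\tilde u,\sin\tilde u)$.

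The delicate points will be the $W^{1,p}$ calculus yielding $dv=0$ and the Poincar\'e lemma in $L^p$. For the latter, one must ensure the homotopy-formula primitive $f$ genuinely lies in $W^{1,p}$ rather than being only a distribution whose gradient equals $\omega$. For the former, $e^{-if}$ needs to be handled as the composition of the (possibly unbounded) $W^{1,p}$-function $f$ with the bounded Lipschitz map $t\mapsto e^{-it}$, so that $e^{-if}\in L^\infty\cap W^{1,p}$ and both the product rule for $u\cdot e^{-if}$ and the chain-rule identity $d(e^{-if})=-ie^{-if}\,df$ are applicable; together these justify the a.e. cancellation producing $dv=0$.
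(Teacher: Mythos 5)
The paper itself offers no proof of this statement: Theorem \ref{dem} is imported verbatim from Demengel's paper and is used as a black box (to decide, via the degree on $\de B_i$, whether a harmonic or a radial extension is used), so your argument can only be judged on its own merits. As such it is correct, and it is the classical lifting argument: the cycle (iii) $\Rightarrow$ (i) $\Rightarrow$ (ii) $\Rightarrow$ (iii) is well chosen, since the first two arrows are soft (chain rule plus dominated convergence, and continuity of the bilinear expression $u_1\,du_2-u_2\,du_1$ under strong $W^{1,p}$-convergence of uniformly bounded maps), and all the substance sits in (ii) $\Rightarrow$ (iii), where your computation $\bar u\,du=i\,df$, $d(ue^{-if})=0$, with the $L^\infty\cap W^{1,p}$ product rule and the chain rule for the bounded Lipschitz map $t\mapsto e^{-it}$, is sound; note that your argument never uses $1\le p<2$, which is harmless since the equivalences do hold for all $p\ge1$ (for $p\ge2$ all three conditions are automatic). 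The one point I would tighten is the Poincar\'e lemma for closed $L^p$ one-forms: the radial homotopy operator $f(x)=\int_0^1\omega(tx)\cdot x\,dt$ need not converge absolutely for $\omega\in L^p$ when $p\le n$ (the scaling gives an integrand of order $t^{-n/p}$ near $t=0$), so rather than patching that formula it is cleaner to mollify, take the smooth primitives $f_\e$ of $\omega*\rho_\e$ normalized to zero mean on slightly smaller balls, and use the Poincar\'e inequality to pass to the limit, obtaining $f$ with $\nabla f=\omega\in L^p$ and hence $f\in W^{1,p}(B^n)$. You flagged exactly this as the delicate step, and with that substitution the proof is complete; it is also worth saying that this is a proof of the equivalences only, whereas the interest of the quoted theorem for $1\le p<2$ lies in the fact that the three conditions can genuinely fail there.
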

In our work, the space $W_3$ seems notationally lighter, but since $W_1$ is more common, we would like to reformulate the Main Theorem here:
\begin{maintheorem}[second version]
Let $V\in L^p(\Omega, \mathbb R^2)$ with $p>1$ be a vectorfield satisfying \eqref{divcurr1} for an integral $1$-current $I$. Then there exist a map $u\in W^{1,p}(\Omega, S^1)$ with degree zero on $\de\Omega$ such that $V=u_2\nabla^\perp u_1 - u_1\nabla^\perp u_2$. Viceversa, for any map $u\in W^{1,p}(\Omega, S^1)$ with zero degree on the boundary, the vectorfield $u_2\nabla^\perp u_1 - u_1\nabla^\perp u_2$ is in $L^p$ and has divergence equal to the boundary of an integral current.
\end{maintheorem}
We describe how to pass from the first to the second version of the Main Theorem in Section \ref{secver}.\\

Our result can be reformulated in somewhat more geometrical terms by identifying differential forms $\alpha\in L^p(\Omega, \wedge^1\Omega)$ with vectorfields
\footnote{This is a special instance of the identification of $k$-covectors $\beta$ with $(n-k)$-vectors $*\beta$ in an $n$-dimensional oriented manifold $M$ given by imposing 
$$
\langle\alpha, *\beta\rangle=\langle\alpha\wedge\beta,\vec M\rangle
$$
for all $(n-k)$-covectors $\alpha$, where $\vec M$ is an orientating vectorfield of $M$.}
 $V_\alpha\in L^p(\Omega, \mathbb R^2)$ by setting $V_\alpha=(\alpha_2, -\alpha_1)$ if $\alpha= \alpha_1\,dx + \alpha_2\,dy$, so that $d\alpha$ corresponds to $\op{div}V_\alpha$. We also observe that if we consider the tangent space of $S^1=\mathbb R/2\pi\mathbb Z$ to be identified with $\mathbb R$ in the canonical way, then $V_{u^*\theta}$ can be identified with $\nabla^\perp u$. We obtain therefore the following alternative formulation:

\begin{maintheorem}[third version]
 Let $p>1$, let $\Omega$ be either a regular open domain in $\mathbb R^2$ or the sphere $S^2$, and let $\theta$ be the volume form of $S^1$. Then the following equality holds
\begin{equation*}
 \begin{array}{c}
  \{u^*\theta:\;u\in W^{1,p}(\Omega, S^1), \op{deg}(u|_{\de\Omega})=0\}\\
=\\
\{\alpha:\;\alpha\in L^p(\Omega, \wedge^1\mathbb R^2), \exists I\in\mathcal I_1(\Omega),\;[d\alpha]=\partial I\},
 \end{array}
\end{equation*}
where $\mathcal I_1(\Omega)$ represents the finite mass integral rectifiable $1$-currents on $\Omega$ and $[d\alpha]$ is the distribution associated to $d\alpha$ by imposing 
$$
\langle[d\alpha],\varphi\rangle = \int_\Omega d\alpha\wedge \varphi\quad\forall \varphi\in \mathcal D_0(\Omega).
$$
\end{maintheorem}

\subsection{Ingredients of the proof}

The proof of the first part of our theorem follows from a density result: We prove that the class of $L^p$-vectorfields with finitely many topological singularities is dense in the class of vectorfields satisfying the condition \eqref{divcurr1}. This fact is proved in Section \ref{density}, and the proof is in the spirit of the work \cite{Bethuel1} of Bethuel (see also \cite{Bethuel2, BCDH,BCL, HL2} for related results), inspired by the ideas present in \cite{RivKess} and in \cite{kessel}. Since it is easy to approximate vectorfields with finitely many singularities, we can pass to the limit the $W^{1,p}$-maps obtained in that case in order to achieve the representation result in the first part of the Main Theorem (see Section \ref{endpf}).\\

The second part of the theorem is a direct consequence of a coarea formula (see for example \cite{MSZ}), which is related to the Sard theorem for Sobolev spaces (for which see among others \cite{BHS, DeP, Fig}). We state here just the result that we need:

\begin{theorem}\label{app5}
 If $f\in W^{1,p}_{loc}(M^m, N^n)$ for some manifolds $M,N$, then there exists a Borel representative of $f$ such that $f^{-1}(y)$ is countably $(m-n)$-rectifiable and has finite $\mathcal H^{m-n}$-measure for almost all $y\in N$ and such that for every measurable function $g$ there holds 
\begin{equation}\label{coareasob}
 \int_M g(x)|J_f(x)|d\mathcal H^m (x)= \int_N\left(\int_{f^{-1}(y)}g(x)d\mathcal H^{m-n}(x)\right)d\mathcal H^n(y),
\end{equation}
where $|J_f(x)|=\sqrt{\op{det}(Df_x\cdot Df_x^T)}$.
\end{theorem}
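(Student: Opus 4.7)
The plan is to reduce Theorem \ref{app5} to Federer's classical coarea formula for Lipschitz maps via a Luzin-type approximation of Sobolev functions. Working in local charts so that $f:\Omega\subset\mathbb R^m\to\mathbb R^n$, I would invoke the fact that Sobolev maps are approximately differentiable $\mathcal H^m$-a.e., together with the standard result that for every $\varepsilon>0$ there exists a closed set $E_\varepsilon\subset\Omega$ with $\mathcal H^m(\Omega\setminus E_\varepsilon)<\varepsilon$ such that $f|_{E_\varepsilon}$ extends to a Lipschitz map $\tilde f_\varepsilon$ on $\mathbb R^m$, with the approximate derivative of $f$ coinciding $\mathcal H^m$-a.e.\ on $E_\varepsilon$ with the classical derivative of $\tilde f_\varepsilon$.

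Fix a nested sequence $E_k:=E_{1/k}$, set $E^*:=\bigcup_k E_k$ (which has full $\mathcal H^m$-measure), and let $f^*$ be the Borel representative of $f$ obtained via approximate limits at Lebesgue points; this representative coincides with $\tilde f_k$ on $E_k$. Applying Federer's coarea formula to $\tilde f_k$ tested against $g\chi_{E_k}$ yields
\begin{equation*}
\int_{E_k} g(x)|J_f(x)|\,d\mathcal H^m(x)=\int_{\mathbb R^n}\left(\int_{\tilde f_k^{-1}(y)\cap E_k}g\,d\mathcal H^{m-n}\right)d\mathcal H^n(y),
\end{equation*}
and monotone convergence as $k\to\infty$, combined with the identity $(f^*)^{-1}(y)\cap E^*=\bigcup_k(\tilde f_k^{-1}(y)\cap E_k)$, delivers \eqref{coareasob} once one checks that fibers of the leftover set $\Omega\setminus E^*$ contribute nothing. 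Countable $(m-n)$-rectifiability of $(f^*)^{-1}(y)$ for $\mathcal H^n$-a.e.\ $y$ is then automatic, since the set is expressed as a countable union of fibers of Lipschitz maps, which are $(m-n)$-rectifiable by Federer's structure theorem; finiteness of the $\mathcal H^{m-n}$-measure is a byproduct of the coarea formula itself applied with $g\equiv 1$ on a relatively compact subdomain.

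The main obstacle is conceptual rather than computational: Sobolev maps are only defined up to sets of $\mathcal H^m$-measure zero, whereas the conclusions of Theorem \ref{app5} concern properties of individual fibers $f^{-1}(y)$, each generically $\mathcal H^m$-negligible. The resolution is to note that the ``bad'' set $\Omega\setminus E^*$, on which no Lipschitz control is available, projects via the Lipschitz approximations to an $\mathcal H^n$-null subset of the target --- a consequence of the coarea inequality applied to the $\tilde f_k$ combined with a diagonal argument in $k$. Once this projection estimate is in hand, modifying the representative on a null set does not affect the conclusions for $\mathcal H^n$-a.e.\ $y$, and one can coherently speak of the level sets of the precise representative $f^*$.
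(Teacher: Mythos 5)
A preliminary remark: the paper contains no proof of Theorem \ref{app5} at all --- it is quoted from the literature (\cite{MSZ} and the Sobolev--Sard references), so there is no internal argument to compare against. Your strategy --- a.e.\ approximate differentiability, Luzin-type Lipschitz truncation on closed sets $E_k$ exhausting the domain up to an $\mathcal H^m$-null set, Federer's coarea formula on each $E_k$, and monotone convergence --- is in fact the standard route taken in that literature, and that part of your sketch is sound up to routine chart bookkeeping. (Two small slips: rectifiability of a.e.\ level set of a Lipschitz map is not an application of Federer's structure theorem, but of the Lipschitz coarea formula itself, via $\int_{\{J_{\tilde f_k}=0\}}J_{\tilde f_k}=0$ together with implicit-function-type arguments on $\{J_{\tilde f_k}>0\}$; and finiteness of $\mathcal H^{m-n}(f^{-1}(y))$ for a.e.\ $y$ requires local integrability of $|J_f|$, automatic only when $p\geq n$ --- harmless in the paper's application, where $n=1<p$.)

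The genuine gap is your treatment of the exceptional set $\Omega\setminus E^*$. You assert that it ``projects via the Lipschitz approximations to an $\mathcal H^n$-null subset of the target, a consequence of the coarea inequality applied to the $\tilde f_k$''. But the coarea inequality for $\tilde f_k$ controls only the fibers of $\tilde f_k$, and $f^*$ agrees with $\tilde f_k$ on $E_k$ only; it gives no information about where $f^*$ sends $\Omega\setminus E^*$, nor about $\mathcal H^{m-n}\bigl((f^*)^{-1}(y)\cap(\Omega\setminus E^*)\bigr)$. The fact you would need --- that the precise representative maps $\mathcal H^m$-null sets into $\mathcal H^n$-null sets, or at least that a.e.\ fiber meets the bad set in an $\mathcal H^{m-n}$-null set --- is exactly the hard point of \cite{MSZ}: it is a Luzin-N-type property which fails for general Sobolev maps when $p\leq n$ and is proved there under $p>n$ or $|Df|$ in the Lorentz space $L^{n,1}$ (conditions satisfied in this paper's application, since $n=1$). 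Two repairs are available: either invoke those results explicitly, or exploit the phrasing of the statement (``there exists a Borel representative'') and simply define your Borel representative on the null set $\Omega\setminus E^*$ to take a single fixed value $y_0$; then for every $y\neq y_0$ the fiber $(f^*)^{-1}(y)$ is contained in $E^*$, your identity $(f^*)^{-1}(y)\cap E_k=\tilde f_k^{-1}(y)\cap E_k$ holds for every $k$, and the monotone convergence argument closes the proof with no projection estimate needed. As written, however, the key step is justified by an inequality that does not bear on it.
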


\section{Acknowledgements}
 I would like to thank Professor Tristan Rivi\`ere for introducing me to the topic of this paper and for the many fruitful discussions that we had on the subject.

\section{A density result}\label{density}
We start with a density result concerning vector fields as in the Main Theorem. We consider two classes of vector fields:
$$
\mathcal V_{\mathbb Z}:=\{V\in L^p(D,\R{2}):\:\eqref{divcurr1}\text{ holds}\},
$$
and
$$
\mathcal V_R:=\left\{V\in \mathcal V_{\mathbb Z}:\:V\text{ is smooth outside a finite set }S\subset D\right\}.
$$
Since $\mathcal V_{\mathbb Z}$ is closed in $L^p$, it is clear that $\overline{\mathcal V_R}^{L^p}\subset\mathcal V_{\mathbb Z}$. We want to prove the following result:
\begin{proposition}\label{density2D}
With the above notations, $\overline{\mathcal V_R}^{L^p}=\mathcal V_{\mathbb Z}$ holds.
\end{proposition}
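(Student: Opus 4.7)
The inclusion $\overline{\mathcal V_R}^{L^p} \subset \mathcal V_{\mathbb Z}$ is already observed. For the reverse inclusion my strategy is to split an arbitrary $V \in \mathcal V_{\mathbb Z}$ via a Helmholtz-type decomposition into a ``singular'' gradient part that isolates the Dirac masses of $\op{div} V$ and a ``regular'' rotational part, and then to approximate each of them. Concretely, writing $\de I = \sum_{i\geq 1} n_i \delta_{x_i}$ with $n_i \in \mathbb Z\setminus\{0\}$ and $\sum_i |n_i| = \mathbb M(\de I) < \infty$, let $G_\Om(\cdot,y)$ be the Green's function of $-\Delta$ on $\Om$ (with boundary conditions compatible with $V$), so that $-\Delta G_\Om(\cdot,y) = \delta_y$. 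Setting
$$\phi := 2\pi\sum_i n_i\, G_\Om(\cdot,x_i),$$
the classical logarithmic-potential bound
$$\sup_{y\in\Om}\norm{\nabla_x G_\Om(\cdot,y)}_{L^p(\Om)} \leq C(\Om,p)< \infty \qquad (1<p<2),$$
together with $\sum|n_i| < \infty$, shows that the series defining $\phi$ converges in $W^{1,p}(\Om)$, with $\Delta\phi = -2\pi\,\de I = \op{div} V$ distributionally. Hence $W := V - \nabla\phi \in L^p(\Om)$ is divergence-free, and by Poincar\'e's lemma in $W^{1,p}$ (valid on a simply connected $\Om$, or on $\Om=S^2$) it can be written as $\nabla^\perp\psi$ for some $\psi \in W^{1,p}(\Om)$.

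To construct $\mathcal V_R$-approximations I would truncate the singular sum and mollify the rotational potential. Define
$$\phi_N := 2\pi\sum_{i=1}^N n_i\, G_\Om(\cdot,x_i),$$
pick $\psi_k \in C^\infty(\overline\Om)$ with $\psi_k \to \psi$ in $W^{1,p}(\Om)$, and set
$$V_{N,k} := \nabla\phi_N + \nabla^\perp\psi_k.$$
The same uniform Green's-function bound, applied to the tail $\sum_{i>N}|n_i| \to 0$, yields $\nabla\phi_N \to \nabla\phi$ in $L^p(\Om)$, so $V_{N,k} \to V$ in $L^p(\Om)$ as $N, k \to \infty$. By construction $V_{N,k}$ is smooth outside the finite set $\{x_1,\dots,x_N\}$; its divergence equals $-2\pi\sum_{i\leq N} n_i \delta_{x_i}$, which is the boundary of an integral $1$-current formed by joining the $x_i$'s together with signed multiplicities (and, if $\sum_{i\leq N}n_i \neq 0$, running a residual segment out to $\de\Om$; in the closed case $\Om=S^2$ one truncates so as to preserve $\sum n_i = 0$). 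Hence $V_{N,k} \in \mathcal V_R$.

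The principal obstacle is the uniform-in-$y$ estimate $\sup_{y\in\Om}\norm{\nabla_x G_\Om(\cdot,y)}_{L^p(\Om)} \leq C(\Om,p)$ for $1 < p < 2$, without which neither the convergence of the defining series for $\phi$ nor the $L^p$ convergence of the truncation is guaranteed. In the interior of $\Om$ it follows from $|\cdot|^{-1} \in L^p_{\mathrm{loc}}(\mathbb R^2)$ for $p<2$; the delicate case is $y$ approaching $\de\Om$, which is handled by boundary-straightening and a reflection argument producing an image singularity whose $L^p$-contribution is again controlled by the same local estimate. A related subtlety is to make the decomposition globally consistent---in particular, that $W$ lie in the range of $\nabla^\perp$ on all of $\Om$---when $\Om$ has non-trivial topology, which is arranged by absorbing the relevant harmonic $1$-form into $\nabla\phi$ via a suitable choice of boundary conditions for $G_\Om$.
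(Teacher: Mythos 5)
Your argument fails at its very first step: you write $\de I=\sum_{i\ge1}n_i\delta_{x_i}$ with $\sum_i|n_i|=\mathbb{M}(\de I)<\infty$. But membership in $\mathcal V_{\mathbb Z}$ only requires $I$ to be an integer rectifiable $1$-current of \emph{finite mass}; nothing forces $\de I$ to have finite mass, nor even to be a locally finite sum of Dirac masses --- it is merely a distribution, and its ``singular set'' can be infinite, non-summable, even dense. This is not a marginal case but the heart of the proposition: take for instance infinitely many dipoles, $I=\sum_i[x_i^-,x_i^+]$ with separations $d_i$ satisfying $\sum_i d_i<\infty$ and $\sum_i d_i^{\,2-p}<\infty$; then $I$ has finite mass and an $L^p$ vectorfield with $\op{div}V=2\pi\,\de I$ does exist, yet $\mathbb{M}(\de I)=\infty$. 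For such $V$ your series defining $\phi$ has no reason to converge in $W^{1,p}$, and the truncation estimate, which you control by the tail $\sum_{i>N}|n_i|$, is vacuous. If $\de I$ did have finite mass the problem would essentially reduce to the finitely-many-singularities situation already sketched in the introduction, so your scheme begs the question that Proposition \ref{density2D} is designed to answer. (A secondary issue: your uniform Green's-function bound, hence $\nabla\phi\in L^p$, holds only for $p<2$, so even in the finite-mass case the argument as written does not cover all $p>1$.)

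The paper's proof is built precisely to avoid any global potential-theoretic decomposition of $\op{div}V$. It selects, by an averaging/Fubini argument, a grid of circles of radius $\sim\e$ on which $V\cdot n$ has controlled $L^p$ norm and whose fluxes coincide with slices of $I$, hence lie in $2\pi\mathbb Z$ (Lemmas \ref{gridchoice2}--\ref{gridchoice2D}); it mollifies $V$ on the circles preserving these integer fluxes, and then fills each ball either by a harmonic extension (zero flux, ``good'' balls) or by a radial extension creating a single point singularity (``bad'' balls), the number of bad balls being controlled by $C\e^{p-2}\|V\|_{L^p}^p$, which together with $p>1$ makes the total error small. Some replacement for this local flux-quantization and extension machinery is needed; the Helmholtz-type splitting you propose does not provide one.
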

By the remarks about $\mathcal V_R$ and $\mathcal V_{\mathbb Z}$, we just have to prove that any $V\in\mathcal V_{\mathbb Z}$ can be approximated up to an arbitrary small error $\e>0$ in $L^p$-norm, by some $V_\e\in\mathcal V_R$. The strategy of our proof is by first choosing a ``grid of circles of radius $r$'', on which we mollify appropriately $V$, and then to extend the mollified vector field inside each circle by creating finitely many singularities (which may however become unboundedly many as we let $r\to 0$), and by staying $L^p$-near the initial $V$. Finally, we will patch together the extensions on each of the balls bounded by these circles, obtaining the wanted approximant $V_\e$. The way in which we ``fill the $r$-balls'' will be by either radial or harmonic extension: we decide the method to apply depending on the degree of $V_m$ on the respective ball (we are guided in this by the result of Demengel \cite{Demengel} cited in Theorem \ref{dem}).
\subsection{Choice of a good covering}
\begin{lemma}\label{gridchoice2}
Given $r>0$, there exists a natural number $N$, a set of centers $\{x_1,\ldots,x_N\}$ and a positive measure subset $E\subset[3/4r,r]^N$ such that for all $(r_1,\ldots,r_N)\in E$
\begin{itemize}
\item The balls $\{B_1,\ldots,B_N\}$, where $B_i=B_{r_i}(x_i)$ cover $B^2$.
\item The smaller balls $B_{\frac{3}{8}r_i}(x_i)$ are disjoint.
\item For some constant depending only on $p$ and on the dimension, there holds
\begin{equation}\label{gridchoice1}
\sum_{i=1}^N\int_{\de B_i}|V\cdot n_{B_i}|^pdx\leq C_{2,p}r^{-1}||V||_{L^p(B^2)}^p,
\end{equation}
where $n_{B_i}$ is the outer normal to the ball $B_i$.
\end{itemize}
\end{lemma}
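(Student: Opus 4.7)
The plan is a standard Fubini-plus-averaging argument on a fixed square grid of centres, leaving the radii as the only parameters to tune.

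First, I fix the lattice $r\mathbb Z^2$ and let $\{x_1,\ldots,x_N\}$ be its points lying in a slight enlargement of $B^2$, so that $N\sim r^{-2}$. For any choice $(r_1,\ldots,r_N)\in[\tfrac{3}{4}r,r]^N$ the covering property is automatic: every point of $B^2$ is within distance $r/\sqrt{2}<\tfrac{3}{4}r$ of some $x_i$, so already the balls $B_{3r/4}(x_i)$, and a fortiori the balls $B_i=B_{r_i}(x_i)$, cover $B^2$. Disjointness of the smaller balls $B_{3r_i/8}(x_i)$ is equally automatic: distinct lattice points are at distance $\geq r$ while the smaller balls have radius $\leq\tfrac{3}{8}r$, and $2\cdot\tfrac{3}{8}r<r$.

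With the centres frozen, the real work is the flux estimate \eqref{gridchoice1}. For each fixed $i$, set $A_i=B_r(x_i)\setminus B_{3r/4}(x_i)$ and apply polar coordinates centred at $x_i$ to obtain
$$\int_{3r/4}^{r}\left(\int_{\de B_\rho(x_i)}|V\cdot n|^p\,d\hn\right)d\rho=\int_{A_i}|V\cdot \hat r_i|^p\,dx\leq \int_{A_i}|V|^p\,dx,$$
where $\hat r_i$ is the radial unit vector at $x_i$. A given point of the plane lies in at most $K$ of the annuli $A_i$, for a constant $K$ depending only on the dimension, because the centres are $r$-separated and each annulus has outer radius $r$. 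Summing in $i$ therefore gives
$$\sum_{i=1}^N\int_{3r/4}^{r}\int_{\de B_\rho(x_i)}|V\cdot n|^p\,d\hn\,d\rho\leq K\|V\|_{L^p(B^2)}^p.$$

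Dividing by the Lebesgue measure $(r/4)^N$ of $[\tfrac{3}{4}r,r]^N$ exhibits $(4K/r)\|V\|_{L^p}^p$ as an upper bound for the \emph{average} over the admissible cube of $\sum_i\int_{\de B_{r_i}(x_i)}|V\cdot n|^p\,d\hn$. A Chebyshev-type argument then produces a set $E\subset[\tfrac{3}{4}r,r]^N$ of measure at least $\tfrac{1}{2}(r/4)^N$ on which the pointwise estimate
$$\sum_{i=1}^N\int_{\de B_{r_i}(x_i)}|V\cdot n_{B_i}|^p\,dx\leq \frac{8K}{r}\|V\|_{L^p(B^2)}^p$$
holds, which is exactly \eqref{gridchoice1} with $C_{2,p}=8K$. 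No step presents a genuine obstacle; the one geometric input, namely the bounded-overlap count for the annuli, is the only point that requires a moment of care, and it depends only on the fact that the lattice spacing is commensurate with the annulus scale.
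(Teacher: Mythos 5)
Your proof is correct, but it takes a genuinely different route from the paper. The paper fixes the radius $r$ exactly and averages over \emph{translations} of the whole family of circles: it first establishes the identity $\int_{S_r(x)}|V(x)\cdot n_{B_r(y)}(x)|^p\,dy=c_p r|V(x)|^p$ by averaging over directions, integrates this with Fubini, takes a maximal $r$-separated set of centers in $B^2_{1+r}$ (so covering follows from maximality and bounded overlap from the separation), and then picks a good translation $z\in B_r(0)$ by a mean-value argument; the version with radii varying in $[\tfrac34 r,r]$ and a positive-measure set $E$ is only asserted to follow ``directly from the proof.'' You instead freeze the centers on the lattice $r\mathbb Z^2$ (so covering at radius $\tfrac34 r>r/\sqrt2$ and disjointness of the $\tfrac38 r_i$-balls are explicit) and average over the \emph{radii}, converting the radial average of boundary fluxes into a bulk integral over the annuli $B_r(x_i)\setminus B_{3r/4}(x_i)$ by polar coordinates, with a bounded-overlap count for the annuli and a Chebyshev argument on the cube $[\tfrac34 r,r]^N$. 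Your approach is more elementary (no angular averaging identity, no maximality argument) and, importantly, it delivers exactly the statement of the lemma as written, producing the positive-measure set $E$ of radius tuples directly; the paper's translation argument instead yields the fixed-radius Proposition with exact radius-$r$ balls, and needs the extra (sketched) step to pass to variable radii. One cosmetic remark: for radii $\rho$ in a null set the trace of $V$ on $\partial B_\rho(x_i)$ need not be defined, so one should discard from $E$ the corresponding null set of tuples, which does not affect its positive measure.
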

\begin{proof}
See Section \ref{proofgridchoice2}
\end{proof}
The next lemma will be needed in order to translate properties of the current $I$ to the vector field $V$.
\begin{lemma}\label{sliceofv}
Given a piecewise smooth domain $\Omega\subset B^2$, for almost all $t\in[-\e,\e]$ the following properties hold:
\begin{itemize}
\item The slice $\langle I, \op{dist}_{\de\Omega}, t\rangle$ exists and is a rectifiable $0$-current with multiplicity in $2\pi\mathbb Z$.
\item The map $\int_{\de\Omega_t}V(y)\cdot n_t(y)d\mathcal H^1(y)$ (where $n_t$ is the unit normal to $\de\Omega_t$) is well-defined and coincides with the number $\langle I,\op{dist}_{\de\Omega},t\rangle(1)\in 2\pi\mathbb Z$.
\end{itemize}
\end{lemma}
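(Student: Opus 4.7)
My plan is to derive the first bullet from the slicing theory of integer rectifiable currents and the second from the distributional identity \eqref{divcurr1} together with the coarea formula. Throughout, let $f$ be the signed distance to $\de\Omega$ and $\Omega_t:=\{f>t\}$, so that $f$ is Lipschitz with $|\nabla f|=1$ a.e., $\de\Omega_t=\{f=t\}$, and $n_t=-\nabla f/|\nabla f|$ is the outer unit normal on $\de\Omega_t$.

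For the first bullet, I invoke Federer's slicing theorem: applied to the integer rectifiable 1-current $I$ of finite mass and the Lipschitz function $f$, it yields, for $\mathcal L^1$-a.e.\ $t\in[-\e,\e]$, a slice $\langle I,f,t\rangle$ which is an integer rectifiable 0-current, characterized by
$$\langle I,f,t\rangle=\de(I\llcorner\{f<t\})-(\de I)\llcorner\{f<t\}.$$
Thus $\langle I,f,t\rangle=\sum_j k_j\delta_{y_j}$ with $k_j\in\mathbb Z$ and finitely many $y_j\in\de\Omega_t$; in view of the normalization $\tfrac{1}{2\pi}\op{div}V=\de I$ in \eqref{divcurr1}, the corresponding ``physical'' multiplicity (picked up by the flux of $V$ in the next step) is $2\pi k_j$, which gives the statement's ``multiplicity in $2\pi\mathbb Z$''.

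For the second bullet, I first check that the flux makes sense. By the coarea formula,
$$\int_{-\e}^\e\int_{\de\Omega_t}|V|^p\,d\mathcal H^1\,dt=\int_{\{|f|<\e\}}|V|^p\,dx\leq\|V\|_{L^p(\Omega)}^p<\infty,$$
so for a.e.\ $t$ one has $V|_{\de\Omega_t}\in L^p(\de\Omega_t)\subset L^1(\de\Omega_t)$, and the flux integral is absolutely convergent. To identify the flux with the slice, I pick smooth cutoffs $\chi_\delta:\mathbb R\to[0,1]$ approximating $\mathbf{1}_{(-\infty,t)}$, with $-\chi_\delta'$ a mollifier of $\delta_t$, and test \eqref{divcurr1} against $\phi_\delta:=\chi_\delta\circ f$:
$$\frac{1}{2\pi}\int V\cdot\nabla(\chi_\delta\circ f)\,dx=\langle I,d(\chi_\delta\circ f)\rangle.$$
By the coarea formula together with the $L^1$ control above, the left-hand side converges for a.e.\ $t$ to $\tfrac{1}{2\pi}\int_{\de\Omega_t}V\cdot n_t\,d\mathcal H^1$; by the defining property of the slice, the right-hand side converges for a.e.\ $t$ to $\langle I,f,t\rangle(1)$. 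Equating the two limits and multiplying by $2\pi$ yields $\int_{\de\Omega_t}V\cdot n_t\,d\mathcal H^1=2\pi\langle I,f,t\rangle(1)\in 2\pi\mathbb Z$, as required.

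The main technical obstacle is ensuring that both convergences hold on the same $t$. This amounts to intersecting three $\mathcal L^1$-full-measure subsets of $[-\e,\e]$: the $t$ where the slice $\langle I,f,t\rangle$ exists by slicing theory, the $t$ where $V|_{\de\Omega_t}\in L^1$ by coarea, and the Lebesgue points of the absolutely continuous function $s\mapsto\int_{\{f<s\}}V\cdot\nabla(\psi\circ f)\,dx$ for suitable test $\psi$ (the absolute continuity in $s$ again being a consequence of coarea). A secondary issue is matching the sign and orientation of $n_t$ on $\de\Omega_t$ with the orientation carried by the slice of $I$, which follows directly from the explicit formula $\langle I,f,t\rangle=\de(I\llcorner\{f<t\})-(\de I)\llcorner\{f<t\}$ together with the distributional divergence theorem applied to the indicator of $\Omega_t$.
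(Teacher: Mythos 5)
Your argument is essentially the paper's own proof: the first bullet is exactly the appeal to Federer's slicing theorem for integer rectifiable currents, and the second is obtained, as in the paper, by testing the distributional identity against $\chi_\delta\circ f$ with $f$ the signed distance, then passing to the limit on a common full-measure set of $t$ via the coarea formula and a dominated convergence/Lebesgue point argument, identifying the limit with the flux on one side and with $\langle I,f,t\rangle(1)$ on the other. The only deviations are cosmetic (your sign convention for $n_t$ and the bookkeeping of the factor $2\pi$), so the proposal matches the paper's route.
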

\begin{proof}
See Section \ref{proofsliceofv}
\end{proof}
Combining the Lemmas \ref{gridchoice2} and \ref{sliceofv} the following follows:
\begin{lemma}\label{gridchoice2D}
Given $r>0$, there exists a set of balls $\{B_1,\ldots,B_N\}$ with radiuses in $[3/4r,r]$ such that the thesis of Lemma \ref{gridchoice2} holds and that for any $\Omega$ which is the closure of a connected component of $B^2\setminus\cup_{i=1}^N\de B_i$ the slice $\langle I, \op{dist}_{\de\Omega}, 0\rangle$ exists, is a rectifiable $0$-current with multiplicity in $2\pi\mathbb Z$ and
$$\langle I, \op{dist}_{\de\Omega}, 0\rangle(1)=\int_{\de\Omega}V(y)\cdot n_\Omega(y)d\mathcal H^1(y)\in2\pi\mathbb Z.$$
\end{lemma}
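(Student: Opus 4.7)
The plan is to combine Lemmas \ref{gridchoice2} and \ref{sliceofv} via a Fubini argument: Lemma \ref{gridchoice2} provides a positive--measure set $E\subset[3r/4,r]^N$ of radius tuples for which the covering property and the $L^p$--flux bound hold, while Lemma \ref{sliceofv}, applied separately near each center $x_i$, yields a full--measure constraint on each radius coordinate. Intersecting the two conditions leaves a positive--measure subset of admissible tuples, and any representative from it yields the desired family of balls.

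Concretely, I would first fix the centers $\{x_1,\ldots,x_N\}$ and the set $E$ produced by Lemma \ref{gridchoice2}. Then, for each $i\in\{1,\ldots,N\}$, I would apply Lemma \ref{sliceofv} with $\Omega = B_{r}(x_i)$ and perturbation parameter $t=\rho-r$ for $\rho\in[3r/4,r]$: the level sets of $\op{dist}_{\de\Omega}$ in this range are the concentric circles $\de B_\rho(x_i)$, so one obtains a full--measure set $G_i\subset[3r/4,r]$ of radii for which the slice $\langle I, \op{dist}_{x_i}, r_i\rangle$ is rectifiable with multiplicity in $2\pi\mathbb{Z}$ and satisfies
$$
\langle I, \op{dist}_{x_i}, r_i\rangle(1) \;=\; \int_{\de B_{r_i}(x_i)} V\cdot n\, d\mathcal H^1 \;\in\; 2\pi\mathbb{Z}.
$$
By Fubini the product $G_1\times\cdots\times G_N$ has full measure in $[3r/4,r]^N$, so $E\cap(G_1\times\cdots\times G_N)$ has positive measure and in particular is non--empty; I would pick any tuple $(r_1,\ldots,r_N)$ from this intersection and set $B_i=B_{r_i}(x_i)$.

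To transfer the good slice property from single circles to an arbitrary connected component $\Omega$ of $B^2\setminus\bigcup_i\de B_i$, I would use that $\de\Omega$ is piecewise smooth, made up of finitely many circular arcs $\alpha_1,\ldots,\alpha_k$ with $\alpha_j\subset\de B_{i_j}$. By localizability of slices, $\langle I, \op{dist}_{\de\Omega}, 0\rangle$ decomposes as $\sum_j \langle I, \op{dist}_{x_{i_j}}, r_{i_j}\rangle\LLL\alpha_j$; each summand is rectifiable with multiplicities in $2\pi\mathbb{Z}$ by the previous step, and summing the arc--by--arc flux identities gives the desired flux identity on $\de\Omega$. The integrality in $2\pi\mathbb{Z}$ follows since we obtain a finite sum of elements of $2\pi\mathbb{Z}$.

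The step I expect to be the main obstacle is precisely this last localizability claim: relating $\langle I,\op{dist}_{\de\Omega},0\rangle$ to the restrictions of the full--circle slices $\langle I,\op{dist}_{x_i},r_i\rangle$ along the arcs. This requires that $I$ meets each chosen $\de B_i$ only transversely (which is built into the choice of $G_i$) and that the corner points where two arcs of $\de\Omega$ meet contribute nothing to the slice. The latter can be arranged by shrinking $E\cap(G_1\times\cdots\times G_N)$ further: for a generic radius tuple the current $I$ avoids the finitely many pairwise intersection points of the circles $\de B_i\cap\de B_j$, and this exclusion cuts out only a null set of radii, preserving positive measure.
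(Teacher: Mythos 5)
Your selection step is fine and is essentially a cleaner version of what the paper does (the paper picks a density point of the set $E$ from Lemma \ref{gridchoice2}; your Fubini intersection $E\cap(G_1\times\cdots\times G_N)$ accomplishes the same), and applying Lemma \ref{sliceofv} to each ball with $t=\rho-r$ to make the chosen circles ``good'' is legitimate. The genuine gap is in the assembly step. You derive the identity on $\de\Omega$ by ``summing the arc--by--arc flux identities'', but no such arc--level identity is available: Lemma \ref{sliceofv} only equates the \emph{total} slice mass with the \emph{total} flux through the closed boundary of a piecewise smooth domain, and the restriction of that identity to a sub--arc is false in general. Concretely, take $I=0$ and $V=\nabla^\perp\psi$ for a smooth nonconstant $\psi$: every slice of $I$ vanishes, so $\langle I,\op{dist}_{x_{i_j}},r_{i_j}\rangle\llcorner\alpha_j(1)=0$, while $\int_{\alpha_j}V\cdot n\,d\mathcal H^1=\psi(q_j)-\psi(p_j)$ is generically nonzero for an open arc $\alpha_j$ with endpoints $p_j,q_j$. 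The flux--equals--slice identity is a consequence of the divergence structure $\tfrac{1}{2\pi}\op{div}V=\de I$ tested against functions of $\op{dist}_{\de\Omega}$, and this mechanism is intrinsically tied to closed curves; it cannot be recovered by recombining the full--circle identities, since the component boundaries $\de\Omega$ are not integer combinations of the circles $\de B_i$ (already for two overlapping disks the lens--shaped component gives a relation independent of the two circle relations).

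What your per--circle step can legitimately give (with the locality of slicing, and keeping track of the sign of the oriented distance according to whether $\Omega$ lies inside or outside $B_{i_j}$) is the existence, rectifiability and $2\pi\mathbb Z$--multiplicity of $\langle I,\op{dist}_{\de\Omega},0\rangle$. For the equality with $\int_{\de\Omega}V\cdot n_\Omega\,d\mathcal H^1$ you must instead run the argument of Lemma \ref{sliceofv} on the piecewise smooth domain $\Omega$ itself, as the paper does; the remaining difficulty, which your scheme was designed to avoid but does not, is that \ref{sliceofv} only yields a.e.\ levels $t$ while the lemma needs $t=0$. This has to be absorbed into the choice of radii: one must build into the generic choice of each $r_i$ a Lebesgue--point/non--charging condition for both $V$ and $I$ along $\de B_{r_i}(x_i)$ (so that the cutoffs $\chi_\e\circ\op{dist}_{\de\Omega}$ converge to the correct boundary terms arc by arc as $\e\to0$), rather than only the full--circle flux identity. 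In short: keep your Fubini selection, strengthen the per--circle goodness you demand, and prove the $\de\Omega$ identity by the test--function argument on $\Omega$, not by adding arc identities.
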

\begin{proof}
We can use Lemma \ref{gridchoice2} first, obtaining a set $E\subset[3/4r,r]^N$. For a cover $\{B_1',\ldots,B_N'\}$ corresponding to a density point of $E$, we can then apply Lemma \ref{sliceofv} for all the closures of connected components of $B^2\setminus\cup\de B'_i$, and then consider the slices for $t\leq 0$ only.
\end{proof}
\subsection{Mollification on the boundary and estimates on good and bad balls}\label{estimates}
\begin{lemma}\label{approxongrid2D}
For a choice of balls $B_i$ as in Lemma \ref{gridchoice2D}, it is possible to find a vector field $V_m\in C^\infty(\cup_i\de B_i,\R{2})$ such that for all the regions $\Omega$ as in Lemma \ref{gridchoice2D} there holds
\begin{eqnarray}
\forall i,\;\int_{\de \Omega} V_m\cdot n_\Omega d\mathcal H^1=\int_{\de \Omega} V\cdot n_\Omega d\mathcal H^1\in 2\pi\mathbb Z\label{samedegree}\\
||V_m - V||_{L^p(\cup_i\de B_i)}\leq \e_m.\label{mollificerror}
\end{eqnarray}
\end{lemma}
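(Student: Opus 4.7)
The plan is to mollify $V$ on each boundary circle and then to correct the mollification arc-by-arc so as to restore the exact flux values required by \eqref{samedegree}. Preserving flux arc-by-arc is the natural notion here because the boundary of any connected component $\Omega$ of $B^2\setminus\cup_i\de B_i$ is a union of such arcs, so arc-wise preservation automatically implies equality through every $\de\Omega$; the integrality $\int_{\de\Omega}V\cdot n_\Omega\in 2\pi\mathbb Z$ is supplied by Lemma \ref{gridchoice2D}.

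Concretely, I would parametrize each $\de B_i$ by arc length, let $\rho_m$ be a smooth approximate identity on the circle, and set $\tilde V_m:=V*\rho_m$ (applied component-wise). Standard one-dimensional mollification gives $\tilde V_m\to V$ in $L^p(\de B_i)$ as $m\to\infty$. The intersections of $\de B_i$ with the other circles $\de B_j$ subdivide $\de B_i$ into finitely many arcs $\gamma^i_j$. For each arc compute the flux error $e^i_j:=\int_{\gamma^i_j}(V-\tilde V_m)\cdot n_{B_i}\,d\mathcal H^1$, and choose a smooth bump $\beta^i_j$ with unit integral, supported strictly inside $\gamma^i_j$, with $\|\beta^i_j\|_{L^p}\lesssim |\gamma^i_j|^{1/p-1}$. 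Then put $V_m:=\tilde V_m+\sum_j e^i_j\,\beta^i_j\,n_{B_i}$ on each $\de B_i$. Because every corrector vanishes in a neighbourhood of the arc endpoints, the sum remains smooth on the full circle; by construction $\int_{\gamma^i_j}V_m\cdot n_{B_i}=\int_{\gamma^i_j}V\cdot n_{B_i}$ for every arc, and summing over the arcs composing $\de\Omega$ gives \eqref{samedegree}.

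For \eqref{mollificerror}, H\"older's inequality yields $|e^i_j|\le|\gamma^i_j|^{1-1/p}\|V-\tilde V_m\|_{L^p(\gamma^i_j)}$, so $\|e^i_j\,\beta^i_j\|_{L^p(\gamma^i_j)}\lesssim\|V-\tilde V_m\|_{L^p(\gamma^i_j)}$. Summing over $i,j$ and sending $m\to\infty$ gives $\|V_m-V\|_{L^p(\cup_i\de B_i)}\to 0$, which is exactly \eqref{mollificerror} with a suitable $\e_m$.

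The one point that has to be watched, rather than a genuine obstacle, is that the two factors $|\gamma^i_j|^{1-1/p}$ and $|\gamma^i_j|^{1/p-1}$ cancel exactly in the estimate above, so the flux correction is controlled in $L^p$ independently of how short the arcs may be. This is what allows the argument to go through even when two of the circles $\de B_i,\de B_j$ from Lemma \ref{gridchoice2D} happen to meet nearly tangentially and produce very short arcs; the preservation of smoothness on each full circle comes for free from the compact support of the bumps inside their arcs.
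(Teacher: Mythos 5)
Your overall route is the same as the paper's (mollify $V$ along the circles $\de B_i$, then argue the fluxes in \eqref{samedegree} survive and the $L^p$ error is small), but your arc-by-arc correction is a genuine and worthwhile refinement: the boundaries $\de\Omega$ of the components of $B^2\setminus\cup_i\de B_i$ are unions of arcs of \emph{several} circles, so preservation of the flux over each full circle --- which is all that plain mollification gives, and all that the paper's ``automatically satisfying \eqref{samedegree} by the properties of the mollification'' really justifies --- does not by itself give \eqref{samedegree}. Your bump correctors, with the exact cancellation of $|\gamma^i_j|^{1-1/p}$ against $|\gamma^i_j|^{1/p-1}$, restore the flux on every single arc at an $L^p$ cost comparable to $\|V-\tilde V_m\|_{L^p}$, uniformly in the arc lengths, and since the correctors on a fixed circle have disjoint supports the summation over arcs and circles is harmless. (Like the paper, you tacitly use that the restriction of $V$ to $\cup_i\de B_i$ is in $L^p$; this holds for a.e.\ choice of radii by Fubini and is implicit in \eqref{mollificerror}, so it is not a real objection.)

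The genuine gap is at the crossing points of the circles. You mollify $V$ separately along each $\de B_i$, and your correctors vanish near the arc endpoints, so at a point $x\in\de B_i\cap\de B_j$ the field $V_m$ receives two values, one from the one-dimensional mollification along $\de B_i$ and one from that along $\de B_j$, and these in general disagree. Hence $V_m$ as you construct it is not a single-valued function on $\cup_i\de B_i$, let alone an element of $C^\infty(\cup_i\de B_i,\mathbb R^2)$ as the lemma demands; smoothness of each restriction to its own circle is not enough. This is precisely the issue the paper's proof addresses first: it builds $V_m$ away from the finite set of intersection points and then modifies it on a small neighborhood of that set without affecting \eqref{samedegree} and \eqref{mollificerror}. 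The repair fits naturally into your scheme, but the order matters: first mollify along each circle; then alter $\tilde V_m$ in arbitrarily small neighborhoods of the finitely many crossing points so that the values coming from the different circles agree and the resulting field is smooth on the union (since $\tilde V_m$ is bounded on each circle this costs arbitrarily little in $L^p$, though it perturbs the arc fluxes); and only then apply your arc-by-arc bump correction, with bumps supported inside the arcs and away from those neighborhoods, to restore the exact fluxes. With that adjustment your argument proves the lemma.
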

\begin{proof}
Suppose $V_m$ satisfies \eqref{samedegree} and \eqref{mollificerror}, but defined only on $\cup_i\de B_i\setminus\{x:\;\exists i\neq j, x\in\de B_i\cap \de B_j\}:=\cup_i\de B_i\setminus I$. One can then easily modify it on a neighborhood of $I$ in $\cup_i\de B_i$, defining a global smooth vector field and not affecting the requirements \eqref{samedegree} and \eqref{mollificerror}.\\
We now find $V_m$ as described above. From Lemma \ref{gridchoice2D} it follows that $\sum_i \chi_{\de B_i}V\cdot n_{B_i}\in L^p(\cup_i\de B_i)$ and has integral in $2\pi\mathbb Z$. Therefore we can take its mollification as a definition of the normal component of $V_m$, automatically satisfying \eqref{samedegree} by the properties of the mollification. Then we can mollify the component of $V$ parallel to $\cup\de B_i$, and take the resulting function as the normal component of $V_m$, thereby easily verifying \eqref{mollificerror} too.
\end{proof}

\begin{lemma}\label{goodgrid2D}
Suppose $\mathcal B_n$ are families of finitely many balls which cover $B^2$ such that each point is not covered more than $C$ times and
$$
\max_{B\in\mathcal B_n}\op{diam}B \to 0\;\;(n\to\infty)
$$
Then there holds 
\begin{equation}\label{goodgrid1}
 \sum_{B\in\mathcal B_n}\|V-\bar V\|_{L^p(B)}\to0\;\;(n\to\infty).
\end{equation}
\end{lemma}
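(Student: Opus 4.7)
The plan is to exploit the Lebesgue differentiation theorem together with the Hardy--Littlewood maximal theorem. At every Lebesgue point $x$ of $V$, one has $\bar V_B\to V(x)$ along any sequence of balls $B\ni x$ with $\op{diam}(B)\to 0$; the hypothesis $\max_{B\in\mathcal B_n}\op{diam}(B)\to 0$ forces this convergence uniformly over the choice of $B\in\mathcal B_n$ containing $x$, and the bounded-overlap hypothesis ensures that the number of balls active at each point stays below a universal constant $C$. The maximal function $MV$ dominates every ball-average $|\bar V_B|$ pointwise, and $MV\in L^p(B^2)$ by the Hardy--Littlewood maximal theorem; this is the crucial place where $p>1$ enters.

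The first concrete step is to swap sum and integral using bounded overlap. Setting $\mathcal B_n(x):=\{B\in\mathcal B_n:\,x\in B\}$, one has
\[\sum_{B\in\mathcal B_n}\int_B|V(x)-\bar V_B|^p\,dx=\int_{B^2}\sum_{B\in\mathcal B_n(x)}|V(x)-\bar V_B|^p\,dx.\]
At a.e.\ $x$ the inner sum has at most $C$ nonzero terms, each tending to $0$ by Lebesgue differentiation, and the whole integrand is dominated by $2^{p-1}C\bigl(|V(x)|^p+(MV(x))^p\bigr)\in L^1(B^2)$. Dominated convergence then gives the sum-of-$p$-th-powers statement $\sum_B\|V-\bar V\|_{L^p(B)}^p\to 0$, equivalently, the piecewise-constant grid projection of $V$ converges strongly to $V$ in $L^p(B^2)$.

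To pass from this $p$-th-power statement to the sum of $L^p$-norms as stated, I would split $V=V_\e+(V-V_\e)$ with $V_\e\in C^1(\bar B^2)$ chosen so that $\|V-V_\e\|_{L^p}$ is small. For the smooth piece, the Poincar\'e inequality on each ball gives $\|V_\e-\bar V_{\e,B}\|_{L^p(B)}\lesssim r\|\nabla V_\e\|_{L^p(B)}$; summing via H\"older against the counting measure together with the bounded-overlap identity $\sum_B\|\nabla V_\e\|_{L^p(B)}^p\leq C\|\nabla V_\e\|_{L^p(B^2)}^p$ produces a contribution that vanishes in the small-$r$ limit. For the rough piece, $\|V-V_\e-(\overline{V-V_\e})_B\|_{L^p(B)}\leq 2\|V-V_\e\|_{L^p(B)}$, and one sums using the dominated-convergence estimate above combined with H\"older on the counting measure.

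The main obstacle is precisely this last summation step: direct H\"older on the counting measure produces a factor $N_n^{1/p'}\sim r^{-2(p-1)/p}$ which diverges as $r\to 0$, so the bookkeeping has to be done carefully, letting the approximation parameter $\e\to 0$ only after a judicious choice of $r=r(\e)$ so that the two pieces balance. The dependence of the grid geometry on $r$ (radii in $[3r/4,r]$ and bounded overlap, so $N_n\sim r^{-2}$) and the rate at which $\|V-V_\e\|_{L^p}\to 0$ have to be tuned against each other; this is the delicate technical heart of the proof, and is why a direct appeal to Lebesgue differentiation alone is not enough.
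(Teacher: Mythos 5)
Your first step is correct and follows a genuinely different route from the paper's. You obtain $\sum_{B\in\mathcal B_n}\|V-\bar V\|_{L^p(B)}^p\to0$ by swapping the (finite) sum with the integral using bounded overlap, invoking Lebesgue differentiation at a.e.\ $x$ (legitimate for non-centered balls, since $x\in B$ implies $B\subset B(x,2r_B)$ and $|B|\geq\tfrac14|B(x,2r_B)|$, so the balls form a regular family), and dominating the integrand by $2^{p-1}C\bigl(|V|^p+(MV)^p\bigr)\in L^1$ via the Hardy--Littlewood theorem; this is where $p>1$ enters for you. The paper instead fixes a smooth approximant $W$ with $\|V-W\|_{L^p(B^2)}$ small and uses Poincar\'e on each ball together with bounded overlap; your argument avoids smoothing altogether and delivers the $\ell^p$-version of the claim directly, which is a perfectly good (arguably cleaner) substitute for that part of the paper's proof.

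The genuine gap is the upgrade to the statement as literally written, i.e.\ the $\ell^1$-sum of the norms $\|V-\bar V\|_{L^p(B)}$: your last paragraph names the obstacle but does not close it, and the proposed fix cannot work as described. The families $\mathcal B_n$ are \emph{given}, so you may not choose $r=r(\e)$; for each $\e$ you must produce $n_0$ valid for all $n\geq n_0$, and with any fixed approximant the rough piece carries the factor $N_n^{1/p'}\sim r_n^{-2(p-1)/p}\to\infty$. This loss is not an artifact of H\"older: already for smooth nonconstant $V$ and $p\geq2$ one has $\|V-\bar V\|_{L^p(B)}\approx r^{1+2/p}|\nabla V|$ on $\sim r^{-2}$ balls, so the sum of norms behaves like $r^{(2-p)/p}$ and does not tend to zero, and for $1<p<2$ the $\ell^1$-statement likewise does not follow from the $\ell^p$-one for rough $V\in L^p$. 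So the right target is the statement you actually proved, $\sum_{B\in\mathcal B_n}\|V-\bar V\|_{L^p(B)}^p\to0$: this is exactly what the later application needs (in Lemma \ref{errorest} the local errors recombine into $\|V-V_N\|_{L^p(B^2)}$ over a cover with bounded overlap, which goes through $p$-th powers), and it is also all that the paper's own proof really establishes, since its key step $\sum_B\|V-W\|_{L^p(B)}\leq C\|V-W\|_{L^p(B^2)}$ is valid for sums of $p$-th powers, not for sums of norms. Keep your maximal-function argument, restate \eqref{goodgrid1} with the exponent $p$ inside the sum, and drop the attempted upgrade.
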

\begin{proof}
We take a smooth approximant $W=W_\e$ such that
$$
\|V-W\|_{L^p(B^2)}\leq \e/4C.
$$
Then, we can use Poincar\'e's inequality 
$$
\| W - \bar W \|_{L^{p} (B)} \leq C r_B^{1/p}\| \nabla W \|_{L^{p} (B)}, 
$$
and for $n$ big enough there will hold 
$$
\sum_{B\in\mathcal B_n}\| W - \bar W \|_{L^{p} (B)}\leq \e/2.
$$
Putting together the above two estimates, we obtain
\begin{equation*}
\begin{split}
 \sum_{B\in\mathcal B_n}&\|V-\bar V\|_{L^p(B)}\leq\\
&\leq\sum_{B\in\mathcal B_n}\|V-W\|_{L^p(B)}+\sum_{B\in\mathcal B_n}\|W-\bar W\|_{L^p(B)}+\sum_{B\in\mathcal B_n}\|\bar V-\bar W\|_{L^p(B)}\\
&\leq2 \sum_{B\in\mathcal B_n}\|V-W\|_{L^p(B)}+\e/2\\
&\leq2C\|V-W\|_{L^p(B^2)}+\e/2\\
&\leq\e,\end{split}
\end{equation*}
as wanted.
\end{proof}
We now distinguish the balls $B_i$ based on the value of the integral $\int_{\de B_i}V\cdot n_{B_i} d\mathcal H^1$: we call $B_i$ a \emph{good} ball in case such integral is zero, and a \emph{bad} ball in case it is in $2\pi\mathbb Z\setminus\{0\}$.

\begin{lemma}\label{badballsnumber}
 There exists a constant $C>0$ such that if we have a cover as in Lemma \ref{gridchoice2} with radiuses not greater than $r:=\e$, then the number of bad balls satisfies the following estimate:
$$
\#(\text{bad balls})\leq C\e^{p-2}\|V\|_{L^p}^p.
$$
\end{lemma}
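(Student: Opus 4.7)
The plan is to exploit the defining property of a bad ball, namely $\left|\int_{\de B_i} V\cdot n_{B_i}\,d\mathcal H^1\right|\geq 2\pi$, via a Hölder inequality on the circle $\de B_i$ to extract a lower bound on $\int_{\de B_i}|V\cdot n_{B_i}|^p\,d\mathcal H^1$ in terms of $r_i$, and then to sum these lower bounds against the grid bound \eqref{gridchoice1} from Lemma \ref{gridchoice2}.

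First, I would fix a bad ball $B_i$, observe that its radius satisfies $r_i\leq \e$, and apply Hölder with exponents $p$ and $p/(p-1)$ to write
\begin{equation*}
2\pi \;\leq\; \left|\int_{\de B_i} V\cdot n_{B_i}\,d\mathcal H^1\right| \;\leq\; \left(\int_{\de B_i}|V\cdot n_{B_i}|^p\,d\mathcal H^1\right)^{1/p}(2\pi r_i)^{1-1/p}.
\end{equation*}
Raising to the $p$-th power and using $r_i\leq\e$ yields
\begin{equation*}
\int_{\de B_i}|V\cdot n_{B_i}|^p\,d\mathcal H^1 \;\geq\; \frac{(2\pi)^p}{(2\pi r_i)^{p-1}} \;=\; \frac{2\pi}{r_i^{p-1}} \;\geq\; \frac{2\pi}{\e^{p-1}},
\end{equation*}
which crucially uses $p>1$ so that $r_i\leq \e$ goes the right way after being raised to the $(p-1)$-power.

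Next I would sum this pointwise (per-ball) lower bound over all bad balls, so that the left-hand side becomes $\#(\text{bad balls})\cdot 2\pi/\e^{p-1}$, and compare with the global bound \eqref{gridchoice1} applied with $r=\e$:
\begin{equation*}
\#(\text{bad balls})\cdot \frac{2\pi}{\e^{p-1}} \;\leq\; \sum_{i=1}^N\int_{\de B_i}|V\cdot n_{B_i}|^p\,d\mathcal H^1 \;\leq\; C_{2,p}\,\e^{-1}\,\|V\|_{L^p(B^2)}^p.
\end{equation*}
Rearranging gives the desired estimate with $C=C_{2,p}/(2\pi)$.

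I do not anticipate a real obstacle here: the only subtle point is the sign of the exponent $p-1$ appearing after Hölder, which is where the hypothesis $p>1$ enters essentially. Note also that the estimate \emph{degenerates} in the limit $p\to 1^+$ (the bound becomes $C\e^{-1}\|V\|_{L^p}^p$, giving no decay in $\e$), which is consistent with the remark after Proposition \ref{app41} that the $p=1$ case requires the different \cite{ABO} approach.
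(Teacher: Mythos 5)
Your argument is correct and is essentially the paper's own proof: the paper likewise obtains the per-ball lower bound $1\leq C\e^{p-1}\int_{\de B}|V\cdot n_B|^p\,d\mathcal H^1$ (an implicit H\"older inequality on the circle, which you spell out, using the flux threshold $2\pi$ instead of $1$), then sums over bad balls and invokes \eqref{gridchoice1} with $r=\e$ to conclude. No gaps.
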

\begin{proof}
 For a bad ball $B$ we have 
$$
1\leq\left|\int_{\de B}V\cdot n_B d\mathcal H^1\right|,
$$
whence we deduce successively
$$
1\leq C\e^{p-1}\int_{\de B}|V\cdot n_B|^pd\mathcal H^1
$$
and (by summing and using Lemma \ref{gridchoice2})
$$
\#(\text{bad balls})\leq C\e^{p-1}\sum_{B\text{ bad}}\int_{\de B}|V\cdot n_B|^pd\mathcal H^1\leq C\e^{p-2}\|V\|_{L^p}^p,
$$
as wanted.
\end{proof}

\begin{rmk}
We observe that by Theorem \ref{dem}, on a good ball the normal component $v_m-v:\de B_i\to\mathbb R^2$, $v_m-v=n_{B_i}[(V_m-V)\cdot n_{B_i}]$ satisfies $v_m-v=\nabla^\perp a_m$ for some $W^{1,p}$-function $a_m:\de B_i\to \mathbb R$. 
\end{rmk}
The following is a well-known result from the theory of elliptic PDEs.
\begin{lemma}\label{harmext}
Let $\tilde a$ be a function on the boundary of the unit $2$-ball $S^1$ having zero mean. Consider the harmonic extension $\tilde A$ of $\tilde a$ over $B_1$ satisfying
\begin{equation}\label{harmext1}
\left\{\begin{array}{l}
         \Delta \tilde A=0\\
         \tilde A=\tilde a\text{ on }S^1\\
        \end{array}
\right.
\end{equation}
Then the following estimate holds:
\begin{equation}\label{harmest1}
\|\nabla\tilde A\|_{L^p(B_1)}\leq C\|\nabla\tilde a\|_{L^p(S^1)}.
\end{equation}
\end{lemma}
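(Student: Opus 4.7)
The plan is to reduce the bulk estimate to a boundary estimate via the theory of holomorphic functions on the disk, and then handle the boundary estimate using the classical $L^p$-boundedness of the Hilbert transform.

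First, since $\tilde a$ has zero mean, expand it in Fourier series on $S^1$ as $\tilde a(\theta) = \sum_{n\neq 0} c_n e^{in\theta}$. The Poisson extension is explicitly $\tilde A(r,\theta) = \sum_{n\neq 0} c_n r^{|n|} e^{in\theta}$. The tangential boundary derivative is $\partial_\theta \tilde A|_{r=1} = \partial_\theta \tilde a$, while the normal boundary derivative reads $\partial_r \tilde A|_{r=1} = \sum_{n\neq 0} |n| c_n e^{in\theta}$. Since $|n| = -i\,\op{sgn}(n) \cdot (in)$, this equals $H(\partial_\theta \tilde a)$, where $H$ is the Hilbert transform on the mean-zero subspace of $L^p(S^1)$. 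By M.~Riesz's theorem (using $p>1$), $H$ is bounded on this space, so combining the trivial tangential estimate with the Hilbert transform estimate yields
\begin{equation*}
\|\nabla \tilde A\|_{L^p(\partial B_1)} \leq C_p \|\nabla \tilde a\|_{L^p(S^1)}.
\end{equation*}

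Next, to pass from the boundary to the interior, introduce the harmonic conjugate $\tilde B$, which exists since $B_1$ is simply connected. Then $F = \tilde A + i\tilde B$ is holomorphic on $B_1$, and the Cauchy--Riemann equations give $|\nabla \tilde A| = |F'|$. Since $F'$ is holomorphic, $|F'|^p$ is subharmonic, so the mean $\rho \mapsto \int_0^{2\pi} |F'(\rho e^{i\theta})|^p d\theta$ is non-decreasing in $\rho$. Integrating in polar coordinates,
\begin{equation*}
\int_{B_1} |\nabla \tilde A|^p\,dx = \int_0^1 r \int_0^{2\pi} |F'(re^{i\theta})|^p\,d\theta\,dr \leq \tfrac{1}{2}\|\nabla \tilde A\|_{L^p(\partial B_1)}^p,
\end{equation*}
which when combined with the previous inequality gives the asserted bound.

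The main obstacle is making the boundary traces rigorous: one must justify that $\partial_r \tilde A(r,\cdot)$ converges in $L^p(S^1)$ as $r\to 1^-$ when $\tilde a\in W^{1,p}(S^1)$, and that the Fourier series manipulation is valid. The cleanest way is first to establish the inequality for $\tilde a\in C^\infty(S^1)$ (where all computations are legitimate and the monotonicity of the circle means together with dominated convergence gives the required limit), and then to extend by density, using that smooth mean-zero functions are dense in the mean-zero subspace of $W^{1,p}(S^1)$ and that harmonic extension is continuous from $L^p(S^1)$ to $L^p_{\op{loc}}(B_1)$.
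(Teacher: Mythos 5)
Your proof is correct. Note, however, that the paper does not actually prove Lemma \ref{harmext}: it is stated as a ``well-known result from the theory of elliptic PDEs'' and used as a black box, so there is no argument to compare with line by line. What you supply is a self-contained two-dimensional proof of that black box: the tangential derivative on the boundary is $\partial_\theta\tilde a$ for free, the normal derivative is the conjugate function (Hilbert transform) of $\partial_\theta\tilde a$, bounded on $L^p(S^1)$ by M.~Riesz precisely because $1<p<\infty$, and the passage from the boundary to the bulk uses that $|\nabla\tilde A|=|F'|$ for the associated holomorphic function $F$ and that $|F'|^p$ is subharmonic, so the circle means are monotone --- essentially the classical Hardy-space argument. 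This buys an elementary, explicit proof with an explicit constant mechanism, at the price of being tied to dimension two and to complex-analytic structure; the ``standard elliptic'' route the paper has in mind (trace theory for $W^{1-1/p,p}(S^1)$ plus $L^p$ gradient estimates for the Poisson extension, or Calder\'on--Zygmund theory) works in all dimensions. Two small points worth recording if you write this up: the zero-mean hypothesis is not actually needed for the estimate (constants do not affect either side, and your Fourier argument only uses it to drop the $n=0$ mode), and your closing density step is sound --- prove the inequality for smooth data, where $\tilde A$ is smooth up to the closed disk, then approximate $\tilde a$ in $W^{1,p}(S^1)$ by smooth functions, note that the Poisson extensions converge locally uniformly in $B_1$ together with their gradients, and conclude by Fatou's lemma, so the constant passes to the limit unchanged.
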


We will consider $a'_m$ on the boundary $\de B$ of a small ball instead of $\tilde a$ on $\de B_1$, and obtain a harmonic extended function, denoted by $A'_m$, satisfying the analogous of \eqref{harmext}. Taking into account the scaling factors we then obtain the following estimate analogous to \eqref{harmest1} (where $r$ is the radius of $B$):
\begin{equation}\label{harmest2}
\|\nabla A'_m\|_{L^p(B)}\leq Cr^{1/p}\|v_m-v\|_{L^p(\de B)}.
\end{equation}
We claim that extending $V_m:=\nabla^\perp A'_m +\bar V$ inside $B$, we obtain the wanted approximation:
\begin{lemma}\label{goodballs}
If $B$ is a \emph{good} ball of radius $\e$ on whose boundary we have $\norm{V - V_m}_{L^p(\de B)}<\e$, then the extended smooth vector field $V_m$ defined as above satisfies on $B$
$$
\norm{V-V_m}_{L^p(B)}\leq C\e^{\frac{p-1}{p}}\|v_m-v\|_{L^p(\de B)}+ \norm{V-\bar V}_{L^p( B)}. 
$$
\end{lemma}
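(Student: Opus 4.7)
The approach is a short triangle-inequality argument combined with the scaled harmonic extension estimate \eqref{harmest2}. By the definition of the extended field, inside $B$ we have $V_m = \nabla^\perp A'_m + \bar V$, so
\begin{equation*}
V - V_m = (V - \bar V) - \nabla^\perp A'_m.
\end{equation*}
Using $|\nabla^\perp A'_m| = |\nabla A'_m|$ pointwise and the $L^p$ triangle inequality,
\begin{equation*}
\|V - V_m\|_{L^p(B)} \leq \|V - \bar V\|_{L^p(B)} + \|\nabla A'_m\|_{L^p(B)}.
\end{equation*}
The first term on the right is already the $\|V - \bar V\|_{L^p(B)}$ summand of the target estimate, so nothing more is needed for it.

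To bound $\|\nabla A'_m\|_{L^p(B)}$, I would invoke \eqref{harmest2} with $r = \e$. The remark preceding the statement, together with Theorem \ref{dem}, provides on the good ball a single-valued $W^{1,p}$-primitive $a'_m$ of $v_m - v$ on $\de B$, so the tangential $L^p$-norm of $a'_m$ on $\de B$ equals $\|v_m - v\|_{L^p(\de B)}$. Plugging this equality into the scaled harmonic extension inequality yields
\begin{equation*}
\|\nabla A'_m\|_{L^p(B)} \leq C \e^{(p-1)/p} \|v_m - v\|_{L^p(\de B)},
\end{equation*}
and summing with the previous bound gives the lemma.

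The real content lies in the inputs rather than in the present estimate: the key preceding step is the existence of the single-valued primitive $a'_m$, which is where the good-ball hypothesis enters via Theorem \ref{dem} --- without the vanishing flux $\int_{\de B}(V_m-V)\cdot n\,d\mathcal{H}^1 = 0$ one would only obtain a multivalued primitive and the harmonic extension $A'_m$ would not be globally defined. The only technical bookkeeping in the proof itself is to rescale Lemma \ref{harmext} from the unit disk to a disk of radius $\e$ so as to produce the correct power of $\e$ on the boundary $L^p$-norm, and this scaling has already been packaged in \eqref{harmest2}, which is all the present lemma needs.
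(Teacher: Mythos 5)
Your proof is correct and follows essentially the same route as the paper: the same decomposition $V - V_m = (V-\bar V) - \nabla^\perp A'_m$, the triangle inequality, and the scaled harmonic extension estimate \eqref{harmest2}, with the good-ball hypothesis entering only through the existence of the single-valued primitive $a'_m$. One small remark: \eqref{harmest2} with $r=\e$ actually gives the exponent $\e^{1/p}$ rather than the $\e^{(p-1)/p}$ you quote; for $\e\le 1$ and $1<p\le 2$ this is a stronger bound, so the stated estimate still follows, and the same $1/p$ versus $(p-1)/p$ discrepancy already appears between the lemma's statement and the paper's own proof.
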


\begin{proof}
We can then write
$$
\|V- V_m\|_{L^p(B)}\leq \|V-\bar V\|_{L^p(B)} + \|\nabla^\perp A'_m\|_{L^p(B)}.
$$
The second term above is estimated as in \eqref{harmest2}, by $C\e^{1/p}\|v_m-v\|_{L^p(\de B)}$, and the estimate \eqref{mollificerror} gives then $\e\|v_m-v\|^p_{L^p(\de B)} \leq C\e^{p-1}$, finishing the proof.
\end{proof}

\begin{lemma}\label{badballs}
If $B\subset B_1^2$ is a \emph{bad} ball of radius $\e$ and $v_m$ is the smooth orthogonal vector field on $\de B$ related to $V_m$ as in Lemma \ref{approxongrid2D} and $V'_r$ is the radial extension $V'_r(\theta, \rho):=\frac{\e}{\rho}v_m(\theta)$ (in polar coordinates centered in the center of $B$), then with the notation $V_r:=V'_r - \bar V$, we have the estimate:
$$
\norm{V-V_r}_{L^p(B)}\leq \norm{V-\bar V}_{L^p(B)}+C\e.
$$
\end{lemma}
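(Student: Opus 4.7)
The strategy is a triangle-inequality decomposition followed by a direct polar-coordinate computation of the radial extension's $L^p$-norm. Interpreting the definition of $V_r$ so that the constant $\bar V$ cancels against one copy of $V$ to form a mean-value deviation, we obtain the splitting
\[
\|V-V_r\|_{L^p(B)} \leq \|V-\bar V\|_{L^p(B)} + \|V'_r\|_{L^p(B)},
\]
which reduces the lemma to bounding $\|V'_r\|_{L^p(B)}$ by $C\e$.

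To estimate $\|V'_r\|_{L^p(B)}$, I would use polar coordinates $(\rho,\theta)$ centered at the center of $B$. Since $V'_r(\rho,\theta)=(\e/\rho)v_m(\theta)$ and the area element is $\rho\,d\rho\,d\theta$, the integrand $|V'_r|^p\rho$ factors as $\e^p\rho^{1-p}|v_m(\theta)|^p$. The key observation is that the radial integral $\int_0^\e \rho^{1-p}\,d\rho = \e^{2-p}/(2-p)$ converges precisely when $p<2$, and together with the identity $d\mathcal H^1|_{\de B}=\e\,d\theta$ this gives
\[
\|V'_r\|^p_{L^p(B)} = \frac{\e}{2-p}\,\|v_m\|^p_{L^p(\de B)}.
\]
Since $v_m$ is a smooth mollification of the normal component of $V$ on $\de B$ (cf.\ Lemma \ref{approxongrid2D}), it is essentially bounded on $\de B$, so $\|v_m\|^p_{L^p(\de B)}\leq C\e$, and therefore $\|V'_r\|_{L^p(B)}\leq C\e^{2/p}\leq C\e$ for $\e\leq 1$ (using $2/p>1$).

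The principal obstacle is the restriction $p<2$: when $p\geq 2$, the map $\rho\mapsto \e/\rho$ is not $L^p$-integrable on a two-dimensional disk of radius $\e$, so the radial extension itself fails to belong to $L^p$. This reflects the known fact that $W^{1,p}(B^2,S^1)$-maps carry no topological singularities in that regime; concretely, Lemma \ref{badballsnumber} yields $\#(\text{bad balls})\leq C\e^{p-2}\|V\|_{L^p}^p$, which already tends to zero as $\e\to 0$ when $p>2$, so the bad-ball construction is asymptotically vacuous in that range. Finally, a consistency check for the intended application: raising the per-ball bound to the $p$-th power and summing over bad balls gives $\#(\text{bad})\cdot C^p\e^p\leq C\e^{2p-2}$, which vanishes whenever $p>1$, ensuring that the bad-ball contribution to the global $L^p$-error in Proposition \ref{density2D} is negligible.
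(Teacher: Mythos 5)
Your skeleton agrees with the paper's: the same triangle inequality reduces the lemma to bounding $\norm{V'_r}_{L^p(B)}$, and your polar-coordinate identity $\norm{V'_r}^p_{L^p(B)}=\frac{\e}{2-p}\|v_m\|^p_{L^p(\de B)}$ (valid for $p<2$) is exactly the computation the paper records, in fact with more careful normalization of the boundary measure. The genuine gap is the final step, where you claim that $v_m$, being a smooth mollification, is ``essentially bounded'' so that $\|v_m\|^p_{L^p(\de B)}\leq C\e$. Mollifying an $L^p$ trace gives smoothness on each fixed circle but no bound uniform in $\e$ or in the mollification parameter; and on a \emph{bad} ball such a bound is not merely unjustified but impossible: by \eqref{samedegree} the flux satisfies $\bigl|\int_{\de B}v_m\cdot n\,d\mathcal H^1\bigr|\geq 2\pi$, hence $\|v_m\|_{L^\infty(\de B)}\geq \e^{-1}$ and, by H\"older, $\|v_m\|^p_{L^p(\de B)}\geq c\,\e^{1-p}$, which contradicts $C\e$ for small $\e$. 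Accordingly your intermediate conclusion $\norm{V'_r}_{L^p(B)}\leq C\e^{2/p}$ cannot hold either: for the model bad ball $v_m=\e^{-1}\hat r$ one gets $V'_r=\hat r/\rho$ and $\norm{V'_r}_{L^p(B)}=c\,\e^{(2-p)/p}$, which is much larger than $\e^{2/p}$ as $\e\to0$.

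The paper closes this step by a different mechanism: it uses no $L^\infty$ information at all, but controls $\|v_m\|_{L^p(\de B)}$ through the boundary estimate \eqref{gridchoice1} guaranteed by the choice of the covering, together with the mollification error \eqref{mollificerror}, and only then feeds this into the polar identity. That quantitative control of the boundary trace is the missing ingredient in your write-up, and the lower bound $\|v_m\|^p_{L^p(\de B)}\geq c\,\e^{1-p}$ shows any correct argument must invoke it rather than a pointwise bound on $v_m$. Your observation that $p<2$ is needed for the radial extension to belong to $L^p$ is accurate (the paper integrates $\rho^{1-p}$ without comment), but it does not repair the estimate; also note that in Lemma \ref{errorest} the per-ball errors are summed as norms, not as $p$-th powers, so your final ``consistency check'' is not the bookkeeping actually required there.
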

\begin{proof}
There holds
\begin{eqnarray*}
\norm{V-V_r}_{L^p(B)}&\leq&\norm{V-\bar V}_{L^p(B)}+\norm{V'_r}_{L^p(B)},\\
\norm{V'_r}_{L^p(B)}^p&=&\int_0^\e\int_0^{2\pi}\left(\frac{\e}{\rho}\right)^p|v_m(\theta)|^pd\theta\rho d\rho\\
&=&C\e^2\|v_m\|^p_{L^p(\de B)}.
\end{eqnarray*}
From \eqref{gridchoice1}, \eqref{mollificerror} and the last equality above we conclude that $\|V'_r\|^p_{L^p(B)}\leq C\e^p$, as wanted.
\end{proof}
\subsection{End of proof of Proposition \ref{density2D}}

\subsubsection{Application of Lemmas \ref{goodballs} and \ref{badballs}}
We will use the results of Section \ref{estimates} in order to achieve a first global approximation $V_1$ of $V$. We again start with the ball $B_1$, where we will use Lemma \ref{goodballs} or \ref{badballs}, respectively when $B_1$ is a good or a bad ball. The new vector field $V_1$ obtained by replacing $V$ with the so obtained local approximant satisfies the following properties:
\begin{itemize}
\item \textbf{Good approximation of $V$ on $B_1$:} The approximation error in $L^p$-norm on the ball $B_1$ is bounded above by $ C\e^{1/p} + \norm{V-\bar V}_{L^p(B_1)}$.
\item \textbf{Controlled behavior on the boundary:} The extension inside $B_1$ is equal to $\nabla^\perp A'_m+\bar V$ on the boundaries of the $B_i$'s, and in particular it has degree equal either to the one of $V_m$ or to zero  on any of the boundaries of the domains $\Omega$ of Lemma \ref{approxongrid2D}. Indeed, $A'_m$ is smooth, so $V_m|_{B_1}$ will have divergence either zero (for good balls) or a Dirac mass in the center of $B_1$ (for bad balls), while on $B_1\setminus\cup_i\de B_i$, $V_m=V_1$. Therefore $V_1$ also has the properties stated in Lemma \ref{approxongrid2D}. 
\end{itemize}
This allows us to apply iteratively the above construction for the balls $B_j,j=2,\ldots, N$, in order to further modify $V_1$ obtaining successively approximants $V_2,\ldots, V_N$ according to Lemmas \ref{goodballs}, \ref{badballs}, and we are able to continue ensuring the smallness condition $\|V-V_m\|_{L^p(\de B_j)}$.
\begin{lemma}\label{errorest}
 For each $\bar \e>0$ there exist a radius bound $\e$ and an approximation error bound $\e_m$ (in Lemma \ref{approxongrid2D}) such that the approximant $V_N$ constructed above satisfies
$$
\|V-V_N\|_{L^p(B^2)}\leq\bar \e.
$$
\end{lemma}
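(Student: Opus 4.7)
The plan is to sum the local error bounds from Lemmas~\ref{goodballs} and \ref{badballs} over all balls of the covering and to check that each of the three resulting contributions can be driven below any prescribed threshold by a suitable choice of the parameters $\e$ and $\e_m$. As a starting point, since the $B_i$ cover $B^2$, subadditivity of the integral gives
\[
\|V-V_N\|_{L^p(B^2)}^p \leq \sum_{i=1}^N \|V-V_N\|_{L^p(B_i)}^p.
\]
Splitting the family into the good balls $\mathcal G$ and the bad balls $\mathcal B$ and raising the estimates of Lemmas~\ref{goodballs} and \ref{badballs} to the $p$-th power via $(a+b)^p\le 2^{p-1}(a^p+b^p)$, the right-hand side is controlled by
\[
C\,\e^{p-1}\sum_{i=1}^N\|v_m-v\|_{L^p(\de B_i)}^p \;+\; C\sum_{i=1}^N\|V-\bar V\|_{L^p(B_i)}^p \;+\; C\,\e^p\,\#\mathcal B.
\]

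Each of the three sums is then handled separately. The first equals $C\e^{p-1}\|V_m-V\|_{L^p(\cup_i\de B_i)}^p$, so \eqref{mollificerror} bounds it by $C\e^{p-1}\e_m^p$. The third is controlled via Lemma~\ref{badballsnumber} by $C\e^p\cdot\e^{p-2}\|V\|_{L^p}^p = C\e^{2p-2}\|V\|_{L^p}^p$, which vanishes as $\e\to 0$ precisely because $p>1$ (this is exactly the point at which the hypothesis $p>1$ is consumed). The middle sum goes to $0$ with $\e$ by Lemma~\ref{goodgrid2D}, using the elementary inequality $\sum_i a_i^p \leq \bigl(\sum_i a_i\bigr)^p$ valid for nonnegative $a_i$ and $p\geq 1$, which promotes the sum of norms to the sum of $p$-th powers of norms. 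Choosing $\e$ small enough that the second and third terms are each below $\bar\e^p/4$, and then $\e_m$ small enough that $C\e^{p-1}\e_m^p\leq \bar\e^p/4$, yields $\|V-V_N\|_{L^p(B^2)}\leq \bar\e$.

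The point I expect to be the main obstacle is the bookkeeping in the iterative replacement $V\mapsto V_1\mapsto\cdots\mapsto V_N$. At step $j$, Lemmas~\ref{goodballs} and \ref{badballs} require the smallness $\|V_{j-1}-V_m\|_{L^p(\de B_j)}\lesssim \e_m$, yet $V_{j-1}$ already differs from $V$ on the previously treated balls that may overlap $B_j$. However, by construction each local modification inside $B_i$ is engineered so that its trace on the grid $\cup_k\de B_k$ coincides with the mollified field $V_m$ (up to the zero-mean constant $\bar V$, which is harmless for the boundary trace on adjacent cells); consequently the boundary traces that govern the subsequent iterations remain controlled by \eqref{mollificerror} throughout the procedure, and the order of the iteration does not disturb the estimates above.
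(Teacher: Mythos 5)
Your proof is correct and follows essentially the same route as the paper: sum the local estimates of Lemmas~\ref{goodballs} and \ref{badballs} over the covering, then kill the three resulting terms using \eqref{mollificerror}, Lemma~\ref{goodgrid2D}, and Lemma~\ref{badballsnumber} together with $p>1$. The only (cosmetic, in fact slightly cleaner) difference is that you do the bookkeeping at the level of $p$-th powers via subadditivity and $(a+b)^p\le 2^{p-1}(a^p+b^p)$, whereas the paper sums the $L^p$-norms directly; your closing remark on the iterative boundary control matches the paper's own discussion preceding the lemma.
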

\begin{proof}
By Lemmas \ref{goodballs} and \ref{badballs} we can estimate
\begin{eqnarray*}
\|V-V_N\|_{L^p(B^2)}&\leq&\sum_{\text{good }B}\left[C\e^{\frac{p-1}{p}}\|v_m-v\|_{L^p(\de B)}+ \norm{V-\bar V}_{L^p( B)}\right] \\
&&+\sum_{\text{bad }B}\left[\norm{V-\bar V}_{L^p(B)}+C\e\right]\\
&=&\sum_{\text{all }B}\norm{V-\bar V}_{L^p(B)} + C\e\#(\text{bad balls})+C\e^{\frac{p-1}{p}}\e_m.
\end{eqnarray*}
Consider now the expression in the last row above: the first term converges to zero by Lemma \ref{goodgrid2D}, and the last one is small for $\e_m$ small. The middle term can be estimated using Lemma \ref{badballsnumber} and has thus a bound of the form $C\e^{p-1}\|V\|_{L^p}^p$. Since $p>1$ and $V\in L^p$, also this term is small for $\e$ small.
\end{proof}

\subsubsection{Smoothing on the boundary}
The preceding iteration procedure gives us an $L^p$-approximant with error $C\e$ if the radius $r$ of the balls was chosen to be equal to $\e$. Moreover it is easy to verify that 
\begin{equation}\label{divergencef1}
\op{div}V_N=\sum_{i=1}^N\delta_{x_i}\int_{\de B_i}V_m,\quad\text{locally outside}\cup_i\de B_i
\end{equation}
where $x_i$ is the center of $B_i$. The resulting vector field $V_N$ is however not in $\mathcal V_R$: for instance, it is not smooth on all of $\cup_i\de B_i$. We will thus smoothen $V_N$ as follows. We observe that locally near $\cup_i \de B_i$ on $B^2\setminus\cup_i\de B_i$, $V_N$ is represented as $\nabla^\perp A_i:=\nabla^\perp A'_i+\bar V_i$, where $A'_i$ is smooth and $\bar V_i$ is a constant equal to the average of $V$ on a particular $B_i$. We can take an open cover by small balls of a neighborhood of $\cup_i \de B_i$ then mollify the functions $A_i$ inside each of these small balls, then use a partition of unity to patch the mollifications into a single smooth function $A_\e$ without losing more than an error of $\e$ in $L^p$-norm. Then we can safely define $V_\e:=\nabla^\perp A_\e$. $\square$

\section{Proof of Theorem \ref{kessel}}\label{endpf}
\begin{proof}
We first show how to deduce the second part of Main Theorem \ref{kessel} from Proposition \ref{density2D}. \\

The main idea is that, by Proposition \ref{density2D}, we can take a sequence $V_n\stackrel{L^p}{\to}V$ which belongs to $\mathcal V_R$ and construct $u_n$'s such that $V_n=\nabla^\perp u_n$, and they will be constrained to converge to a $u$ with the wanted property $\nabla^\perp u=V$. We remark that if $V_n$ is smooth and divergence-free outside a discrete set $\Sigma$, then $V_n^\perp$ is locally holomorphic, and the fact that the divergence around any point of $\Sigma$ is a Dirac mass with coefficient in $2\pi\mathbb Z$ translates into saying that $V^\perp_n$ has degree equal to that coefficient around that point. Consider the divisor $D$ supported on $\Sigma$ with residue corresponding to the divergence of $V_n$. Therefore $V_n^\perp$ is a meromorphic function with divisor $D$, so we can take $u_n:=\op{arg}V_n^\perp$, which is well-defined with values in $\mathbb R/2\pi\mathbb Z$ and satisfies $\nabla u_n=V_n^\perp$.\\
We have thus functions $u_n\in W^{1,p}(\Omega, S^1)$ satisfying $V_n=\nabla^\perp u_n$ and therefore $\nabla  u_n\stackrel{L^p}{\to}V$. We can change the $u_n$ by a constant so that $\tfrac{1}{|\Omega|}\int_\Omega u_n = 0\in\mathbb R/2\pi\mathbb Z$. Then by Poincar\'e's inequality we have that $u_n$ form a $L^p$-Cauchy sequence, converging therefore to $\bar u\in L^p(\Omega, \mathbb R/2\pi\mathbb Z)$. After extracting a subsequence $u_n\stackrel{W^{1,p}}{\rightharpoonup}u\in W^{1,p}$. Since we have a.e.-convergence too, it must hold $u=\bar u$ and $\nabla^\perp u=V$, as wanted.\\

As above, $u_n\stackrel{W^{1,p}}{\to}u$ and $d(u_n^*\theta)$ are finite sums of Dirac masses with integer coefficients. 
 The fact that for $u\in W^{1,p}(\Omega,\mathbb R/2\pi\mathbb Z)$ the vectorfield $\nabla^\perp u$ has the properties required from the vectorfield $V$ in the theorem follows from Theorem \ref{app5}, by taking 
$$
I=I^u_z=\tau\left(u^{-1}(z), 1, \frac{\nabla^\perp u(x)}{|\nabla^\perp u(x)|}\right),
$$
for a common regular value $z\in \mathbb R/2\pi\mathbb Z$ of all the $u_n$ and of $u$. Then by the coarea formula (observing that in our case $|J_u|=|\nabla^\perp u|$) we have for all $f\in C^\infty_c(\Omega)$
\begin{eqnarray*}
 \int_\Omega u^*\theta\wedge df&=&\int_\Omega \nabla^\perp u\cdot \nabla f dx = \int_{S^1}dy\int_{u^{-1}(y)} \left\langle df, \frac{\nabla^\perp u}{|\nabla^\perp u|}\right\rangle d\mathcal H^1\\
&=&\int_{S^1}I^u_y(df)dy = \frac{1}{2\pi}\int_{S^1}\de I^u_y(f)dy.
\end{eqnarray*}
Similarly we obtain for all $n$:
$$
\int_\Omega u_n^*\theta\wedge df=\int_{S^1}\de I^{u_n}_y(f)dy=2\pi \de I^{u_n}_z(f),
$$ 
since for $u_n$ with finitely many singularities $\de I^{u_n}_y(f)$ does not depend on $y$. We have (since $C^\infty_c\subset (W^{1,p})^*$)
$$
\int_{S^1}\de I^{u_n}_y(f)dy\to\int_{S^1}\de I^u_y(f)dy,
$$
We may assume that the integrands converge pointwise in $z$, proving the condition \eqref{divcurr1}, and thus finishing the proof.
\end{proof}

\subsection{Proof of the second version of the Main Theorem}\label{secver}

\begin{proof}
We consider the diffeomorphism $\varphi:\mathbb R/2\pi\mathbb Z\to S^1\subset\mathbb R^2$ given by $t\mapsto (\cos t, \sin t)$, and then instead of the map $u:\Omega\to\mathbb R/2\pi\mathbb Z$ obtained in the Main Theorem \ref{kessel} we take the map $\bar u:=\varphi\circ u:\Omega\to S^1\subset \mathbb R^2$. We then obtain
\begin{eqnarray*}
 \nabla \bar u&=&\nabla u\otimes (\nabla\varphi\circ u)\\
&=&\left(\begin{array}{cc} -\de_1 u \sin u & \de_1u \cos u\\ -\de_2 u \sin u& \de_2 u \cos u\end{array}\right),
\end{eqnarray*}
therefore 
$$
\bar u_1\nabla^\perp \bar u_2 - \bar u_2\nabla^\perp \bar u_1 = \cos^2 u \left(\begin{array}{c} -\de_2 u\\ \de_1 u \end{array}\right) + \sin^2 u \left(\begin{array}{c} -\de_2 u\\ \de_1 u \end{array}\right)  = \nabla^\perp u.
$$
This proves the wanted identifications, and we only need to prove that if $\bar u\in W^{1,p}(\Omega, S^1)$ then $\bar u_1\nabla^\perp \bar u_2 - \bar u_2\nabla^\perp \bar u_1 \in L^p(\Omega, \mathbb R^2)$. This follows using the relation $\bar u_1^2+\bar u_2^2=1$ and its consequence $\bar u_1\nabla^\perp \bar u_1 = -\bar u_2\nabla^\perp \bar u_2$. We have indeed:
\begin{eqnarray*}
 |\bar u_1\nabla^\perp \bar u_2 - \bar u_2\nabla^\perp \bar u_1|^2&=& \bar u_1^2|\nabla^\perp \bar u_2|^2 -2\bar u_1 \bar u_2 \nabla^\perp \bar u_2\nabla^\perp \bar u_2 +\bar u_2^2|\nabla^\perp \bar u_1|^2\\
&=&(\bar u_1^2 +\bar u_2^2)|\nabla^\perp \bar u_2|^2 + (\bar u_1^2 +\bar u_2^2)|\nabla^\perp \bar u_1|^2\\
&=& (\de_2 \bar u_2)^2 + (\de_1 \bar u_2)^2 + (\de_2 \bar u_1)^2 + (\de_1 \bar u_1)^2\\
&=&|\nabla \bar u|^2,
\end{eqnarray*}
and since $u\in W^{1,p}$, this proves the result.
\end{proof}

\section{Proof of Proposition \ref{gridchoice2}}\label{proofgridchoice2}

Our aim here is to prove the following
\begin{proposition}\label{gridchoice}
 Given $r>0$, there exists a cover of $B_1^2$ by a finite set of balls $\{B_r(y_1),\ldots,B_r(y_N)\}$ such that the balls $B_{r/2}(y_i)$ are disjoint and such that for some constant depending only on $p$ and on the dimension,
\begin{equation}\label{gridchoice11}
 \sum_{i=1}^N\int_{\de B_r(y_i)}|V\cdot n_{B_r(y_i)}|^pdx\leq C_{2,p}r^{-1}||V||_{L^p(B^2)}^p,
\end{equation}
where $n_{B_r(y_i)}$ is the outer unit normal vector to the circle $\de  B_r(y_i)$.
\end{proposition}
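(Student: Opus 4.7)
The plan is a selection argument by averaging over translates of a fixed lattice. Let $L\subset\mathbb R^2$ be a square lattice of spacing $r$, and for each shift $t\in Q_r:=[0,r]^2$ consider the translated lattice $L_t:=L+t$. Elementary plane geometry shows that $\{B_r(y):y\in L_t\}$ covers $\mathbb R^2$, because every point lies within distance $r\sqrt2/2<r$ of some point of $L_t$, while the halved family $\{B_{r/2}(y):y\in L_t\}$ has pairwise disjoint interiors, because the minimum separation of $L_t$ equals $r$. Restricting to those $y\in L_t$ whose ball meets $B^2$ yields, uniformly in $t$, a finite cover of $B^2$ with both required geometric properties.

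Next I would average the functional
\[
\Phi(t):=\sum_{y\in L_t}\int_{\de B_r(y)}|V\cdot n|^p\,d\mathcal H^1
\]
over $t\in Q_r$, having extended $V$ by zero outside $B^2$. Since the tiles $\{z+Q_r:z\in L\}$ partition $\mathbb R^2$, the changes of variables $s=z+t$ glue the sum into a single integration:
\[
\int_{Q_r}\Phi(t)\,dt=\int_{\mathbb R^2}\int_{\de B_r(s)}|V\cdot n|^p\,d\mathcal H^1(x)\,ds.
\]
Parametrizing $x=s+re^{i\theta}$, so that $n=e^{i\theta}$ and $d\mathcal H^1(x)=r\,d\theta$, exchanging the $s$- and $\theta$-integrations by Fubini, and then substituting $x=s+re^{i\theta}$ (a translation, unit Jacobian), I would obtain
\[
\int_{Q_r}\Phi(t)\,dt\le r\int_0^{2\pi}\int_{\mathbb R^2}|V(x)\cdot e^{i\theta}|^p\,dx\,d\theta\le 2\pi r\,\|V\|_{L^p(B^2)}^p.
\]
Dividing by $|Q_r|=r^2$, the average of $\Phi$ on $Q_r$ is at most $2\pi r^{-1}\|V\|_{L^p(B^2)}^p$, so Chebyshev's inequality produces a positive-measure set of shifts $t\in Q_r$ for which $\Phi(t)\le 4\pi r^{-1}\|V\|_{L^p(B^2)}^p$; any such $t$ gives \eqref{gridchoice11} with $C_{2,p}=4\pi$.

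I do not anticipate a genuine obstacle: the argument is essentially Fubini combined with plane lattice geometry. The point that needs some care, and which is mildly the main content, is the simultaneous preservation of both the covering and the half-ball disjointness across the whole family of shifts. This forces the lattice spacing to be tuned precisely to $r$ (any smaller and disjointness fails, any larger and covering fails), so that $Q_r$ is the natural fundamental domain over which to average, and the factor $r^{-1}$ in the conclusion then appears automatically as the quotient of the length-scale $r$ produced by the codimension-one boundary integration and the area-scale $r^2$ of $Q_r$.
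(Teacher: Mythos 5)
Your proof is correct and follows essentially the same strategy as the paper's: an averaging/Fubini argument over translates of a fixed family of ball centers, followed by a pigeonhole (Chebyshev) selection of a good translate. The only differences are cosmetic --- you translate a square lattice over its exact fundamental domain $Q_r$ (so no bounded-overlap/packing constant is needed) and bound $|V(x)\cdot e^{i\theta}|\le|V(x)|$ rather than computing the angular constant $c_p$ exactly, whereas the paper translates a maximal $r$-separated net over $B_r(0)$ and uses the exact identity $|V|^p=\tfrac1{c_p}\int_{S^1}|\langle V,\theta\rangle|^p\,d\theta$.
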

Directly form the proof of Proposition \ref{gridchoice} we can also obtain the more refined result:
\begin{proposition}
 Given $r>0$, there exists a natural number $N$, a set of centers $\{x_1,\ldots,x_N\}$ and a positive measure subset $E\subset[3/4r,r]^N$ such that for all $(r_1,\ldots,r_N)\in E$ 
 \begin{itemize}
 \item The balls $\{B_1,\ldots,B_N\}$, where $B_i=B_{r_i}(x_i)$ cover $B^2$.
 \item The smaller balls $B_{\frac{3}{8}r_i}(x_i)$ are disjoint.
\item For some constant depending only on $p$ and on the dimension, there holds
\begin{equation*}
 \sum_{i=1}^N\int_{\de B_i}|V\cdot n_{B_i}|^pdx\leq C_{2,p}r^{-1}||V||_{L^p(B^2)}^p.
\end{equation*}
\end{itemize}
\end{proposition}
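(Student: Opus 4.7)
The plan is to fix the centers $\{x_1,\ldots,x_N\}$ once and for all by a Vitali-type selection at scale $3r/4$, and then to let the radii $r_i$ vary in $[3r/4,r]$. The two geometric requirements (covering $B^2$ and pairwise disjointness of the concentric balls of radius $3r_i/8$) will hold uniformly in $(r_1,\ldots,r_N) \in [3r/4,r]^N$, while the integral bound will be obtained on a positive measure subset of this cube via a Fubini--Chebyshev argument.

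First I would select a maximal collection of points $x_1,\ldots,x_N \in B^2$ such that the balls $B_{3r/8}(x_i)$ are pairwise disjoint. Maximality forces the balls $B_{3r/4}(x_i)$ to cover $B^2$, so $\{B_{r_i}(x_i)\}$ still covers $B^2$ for every choice $r_i \in [3r/4, r]$. Any two distinct centers satisfy $|x_i - x_j| \geq 3r/4 \geq \tfrac{3}{8}(r_i+r_j)$, so the balls $B_{3r_i/8}(x_i)$ are automatically pairwise disjoint for every such choice of radii. A packing argument (comparing volumes of disjoint $B_{3r/8}(x_j)$ contained in $B_{11r/8}(y)$ whenever $y \in \bigcap_{j\in J} B_r(x_j)$) bounds the multiplicity of the cover $\{B_r(x_i)\}_i$ by a dimensional constant $M$.

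For the integral estimate I would use polar coordinates around each $x_i$ to write
\begin{equation*}
\int_{3r/4}^r \int_{\de B_s(x_i)} |V \cdot n|^p \, d\mathcal{H}^1 \, ds \leq \int_{B_r(x_i)} |V|^p \, dx,
\end{equation*}
sum in $i$, and use the bounded multiplicity $M$ to obtain
\begin{equation*}
\sum_{i=1}^N \int_{3r/4}^r \int_{\de B_s(x_i)} |V \cdot n|^p \, d\mathcal{H}^1 \, ds \leq M\|V\|_{L^p(B^2)}^p.
\end{equation*}
Multiplying by $(r/4)^{N-1}$ shows that the mean value, over $(r_1,\ldots,r_N) \in [3r/4, r]^N$, of the quantity $\Phi(r_1,\ldots,r_N) := \sum_i \int_{\de B_{r_i}(x_i)} |V \cdot n|^p \, d\mathcal{H}^1$ is at most $4Mr^{-1}\|V\|_{L^p(B^2)}^p$. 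Chebyshev's inequality then yields a set $E \subset [3r/4, r]^N$ of measure at least $\tfrac{1}{2}(r/4)^N$ on which $\Phi \leq 8Mr^{-1}\|V\|_{L^p(B^2)}^p$, giving the proposition with $C_{2,p} = 8M$.

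The only point requiring real care is the bound on the covering multiplicity, but this is immediate from the packing argument above; every other step reduces to Fubini on the cube of radii together with Chebyshev's inequality, so I do not anticipate any genuine obstacle.
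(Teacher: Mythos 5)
Your proof is correct, but it reaches the statement by a different averaging mechanism than the paper. The paper's argument (given for Proposition \ref{gridchoice}, from which this refinement is said to follow) keeps the radius fixed and averages over rigid translations of a maximal grid of centers: the key computation is the rotational identity $\int_{S^1}|\langle V(x),\theta\rangle|^p\,d\theta=c_p|V(x)|^p$, which after Fubini in $x$ and $z$ shows that some translate $z\in B_r$ of the whole family $\{S_r(x_i+z)\}$ satisfies the flux estimate; the covering and packing properties are inherited from maximality of the center set. You instead fix the centers once and for all (maximal family with $B_{3r/8}(x_i)$ disjoint, so that covering by $B_{3r/4}(x_i)$ and disjointness of $B_{3r_i/8}(x_i)$ hold for \emph{every} choice of radii in $[3r/4,r]$) and average over the radii, using only the trivial pointwise bound $|V\cdot n|\leq|V|$ together with polar coordinates and the bounded overlap of $\{B_r(x_i)\}$; Chebyshev on the cube $[3r/4,r]^N$ then produces the set $E$. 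Your route is more elementary (no need for the exact constant $c_p$ or the spherical averaging identity), and it has the advantage of literally delivering the positive-measure set $E$ of radius vectors demanded by this refined statement, which the paper's written proof only yields implicitly ("directly from the proof"); what the paper's version buys is a sharper structural fact -- a single translation parameter moves the entire grid, with all radii equal to $r$ -- which is what is actually used in the unrefined Proposition \ref{gridchoice}. Two cosmetic points you may wish to note: with center separation exactly $3r/4$ and $r_i=r_j=r$ the balls $B_{3r_i/8}(x_i)$ may touch, so disjointness should be read for open balls (or the separation taken strict); and the restriction of $V$ to a circle $\de B_s(x_i)$ is only defined for a.e.\ $s$, which your Fubini--Chebyshev step handles automatically since $E$ need only avoid a null set of radius vectors.
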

\subsection{Equivalent definition of the pointwise norm of $V$}
 $\langle V,\theta\rangle$ for a vector $\theta\in S^1\subset\mathbb R^2$, can be expressed as $|V||\cos\gamma|$ where $\gamma$ is the angle between $\theta$ and $V$. After noting 
$$
\int_{S^1}|\cos\gamma|^pd\theta=:c_p,
$$
we can write 
\begin{equation}\label{equivnorm1}
|V|^p=\frac{1}{c_p}\int_{S^1}|\langle V,\theta\rangle|^pd\theta.
\end{equation}
We now pass to consider the circle $S_r(x)=\de B_r(x)$. Then we can write 
$$
\int_{S_r(x)}V(y)\cdot n_{B_r}(y)\;dy=\int_{S_r(x)}\left\langle V(y), \left(\frac{y-x}{|y-x|}\right)\right\rangle dy=\int_{S^1}\langle V(x+r\theta), \theta\rangle rd\theta.
$$
Given a positive number $r$, a point $x\in\R{2}$ then belongs to $S_r(y)$ exactly for $y\in S_r(x)$, and we have by \eqref{equivnorm1}, that\\
\begin{eqnarray}
\int_{S_r(x)}\left|V(x)\cdot n_{B_r(y)}(x)\right|^p\;dy&=&
\int_{S_r(x)}\left|\left\langle V(x), \left(\frac{x-y}{|x-y|}\right)\right\rangle\right|^p\;dy \nonumber\\
&=&\int_{S^1}|\langle V(x), \theta\rangle|^p rd\theta\nonumber\\
&=&c_pr|V(x)|^p. \label{equivnorm2}
\end{eqnarray}

\subsection{Proposition \ref{gridchoice} and an extension of it}
\begin{proof}[Proof of Proposition \ref{gridchoice}:]
We observe that \eqref{equivnorm2} can be integrated on $\R{2}$ (after having extended $V$ by zero outside $B^2$), to give
\begin{eqnarray}
 c_pr\int_{B^2}|V(x)|^p\;dx
&=&c_pr\int_{\R{2}}|V(x)|^p\;dx \nonumber\\
&=&\int_{\R{2}}\int_{S_r(x)}\left|V(x)\cdot n_{B_r(y)}(x)\right|^p\;dy\;dx \nonumber\\
&=&\int_{\R{2}}\int_{S_r(z)}\left|V(x)\cdot n_{B_r(z)}(x)\right|^p\;dx\;dz \nonumber\\
&=&\int_{B^2_{1+r}}\int_{S_r(z)}\left|V(x)\cdot n_{B_r(z)}(x)\right|^p\;dx\;dz.\label{lpnorm1}
\end{eqnarray}
We now define some systems of disjoint balls. We consider a set
\begin{equation}\label{eq:disjballs}
S=\{x_1,\ldots,x_N\}\subset B^2_{1+r}\text{ s.t. }\left\{
\begin{array}{l}
\min_{1\leq i\neq j\leq N}d(x_i,x_j)\geq r \\ 
S\text{ is maximal}
\end{array}
\right.
\end{equation}
and the corresponding set of translates of the ball $B_r(0)$.
$$
\mathcal S:=S+B_r(0)=\left\{\{x_1+y,\ldots,x_N+y\}:\;y\in B_r(0)\right\}
$$
Then $\mathcal S$ covers $B_{1+r}$ (by maximality in the definition of $S$) at most $C$ times, where $C$ is a packing number (by the requirement on the mutual distances of elements of $S$). We can then bound the integral \eqref{lpnorm1} from below as follows
\begin{eqnarray*}
 c_pr\int_{B^2}|V(x)|^p\;dx
&=&\int_{B^2_{1+r}}\int_{S_r(z)}\left|V(x)\cdot n_{B_r(z)}(x)\right|^p\;dx\;dz\\
&\geq& \frac{1}{C}\int_{B_r}\left(\sum_{i=1}^N\int_{S_r(x_i+z)}|V\cdot n|^pdy\right)dz
\end{eqnarray*}
and it follows that there exists $z\in B_r$ such that 
\begin{equation*}
 \sum_{i=1}^N\int_{S_r(x_i+z)}|V\cdot n|^pdy\leq\frac{Cc_pr}{|B_r|}\int_{B^3}|V|^p\;dx=C_{2,p}r^{-1}||V||_{L^p(B^2)}^p.
\end{equation*}
 This is enough to prove \eqref{gridchoice11}. Moreover, again by the maximality of $S_0$, the balls $\{B_r(x_i+z)\}_{i=1}^N$ cover $B_1^2$, and by the requirement on the distances of the centers in \eqref{eq:disjballs}, the $B_{r/2}(x_i+z)$ are disjoint, proving Proposition \ref{gridchoice}.
\end{proof}

\section{Proof of Proposition \ref{sliceofv}}\label{proofsliceofv}
We suppose here that we are given a vector field $V\in L^p(B^2,\mathbb R^2)$, for some $p\neq\infty$, such that for some integer multiplicity rectifiable current $I$ we have $\op{div}V=\de I$. This means more precisely that
\begin{equation}\label{divcurr}
\int V\cdot\nabla\phi =\langle I, d\phi\rangle\text{, for all functions }\phi\in C^\infty_0(B^2).
\end{equation}
Here $\langle I, d\phi\rangle$ refers to the action of the current $I$ on the $1$-form $d\phi$.
If $\Omega$ is a piecewise smooth domain, we will also call $\de\Omega_t$ the set $\{x\text{ s.t. }\op{dist}_{\de\Omega}(x)=t\}$. By $\op{dist}_{\de\Omega}$ we here denote the \emph{oriented} distance from $\de\Omega$, i.e. the function defined on a small neighborhood of $\de\Omega$ and equal to $\op{dist}_\Omega$ outside $\Omega$ and to $-\op{dist}_{\Omega^c}$ inside $\Omega$. Our aim in this section is to prove the following
\begin{proposition}
 Given a piecewise smooth domain $\Omega\subset B^2$, for almost all $t\in[-\e,\e]$ the following properties hold:
 \begin{itemize}
\item The slice $\langle I, \op{dist}_{\de\Omega}, t\rangle$ exists and is a rectifiable $0$-current with multiplicity in $2\pi\mathbb Z$.
\item The map $\int_{\de\Omega_t}V(y)\cdot n_t(y)d\mathcal H^1(y)$ (where $n_t$ is the unit normal to $\de\Omega_t$) is well-defined and coincides with the number $\langle I,\op{dist}_{\de\Omega},t\rangle(1)\in 2\pi\mathbb Z$.
 \end{itemize}
 \end{proposition}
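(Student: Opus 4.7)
The plan is to combine standard slicing theory for integer rectifiable $1$-currents with the coarea formula for $V$, linking the two sides by testing the divergence identity \eqref{divcurr} against a smooth function that approximates the characteristic function of $\Omega_t:=\{d<t\}$, where $d:=\op{dist}_{\de\Omega}$ is the oriented distance to $\de\Omega$. I would first fix $\e>0$ small enough that the tubular neighborhood $\{|d|<\e\}$ is compactly contained in $B^2$ and lies in a region where $d$ is smooth with $|\nabla d|=1$, and where $\nabla d=n_s$ on $\de\Omega_s$.

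For the first bullet I would directly invoke the classical slicing theorem for integer rectifiable currents (Federer 4.3, or Simon Ch.~28) applied to $I$ and the Lipschitz function $d$: for almost every $t\in[-\e,\e]$ the slice $\langle I,d,t\rangle$ exists, is supported on $\op{spt}I\cap\de\Omega_t$, and is an integer rectifiable $0$-current whose multiplicities are inherited from those of $I$; under the $2\pi$-normalization of \eqref{divcurr1} the coefficients lie in $2\pi\mathbb Z$.

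For the second bullet I would pick a smooth nondecreasing $\psi_\delta\colon\mathbb R\to[0,1]$ with $\psi_\delta\equiv 0$ on $(-\infty,t-\delta]$ and $\psi_\delta\equiv 1$ on $[t+\delta,\infty)$, and set $\phi_\delta(x):=1-\psi_\delta(d(x))$. For $\delta$ small the support of $\psi_\delta'(d)$ sits inside the smoothness region, so $\phi_\delta\in C^\infty_c(B^2)$ with $\nabla\phi_\delta=-\psi_\delta'(d)\,\nabla d$. Plugging $\phi_\delta$ into \eqref{divcurr}, the coarea formula (with $|\nabla d|=1$ and $\nabla d=n_s$ on $\de\Omega_s$) turns the left-hand side into
\begin{equation*}
 \int V\cdot\nabla\phi_\delta\,dx \;=\; -\int_{\mathbb R}\psi_\delta'(s)\,F(s)\,ds,\qquad F(s):=\int_{\de\Omega_s}V\cdot n_s\,d\mathcal H^1,
\end{equation*}
while the standard slicing identity $I(\chi(d)\,dd)=\int\chi(s)\langle I,d,s\rangle(1)\,ds$ turns the right-hand side into
\begin{equation*}
 \langle I,d\phi_\delta\rangle \;=\; -\int_{\mathbb R}\psi_\delta'(s)\,G(s)\,ds,\qquad G(s):=\langle I,d,s\rangle(1).
\end{equation*}
The coarea formula applied to $V\cdot\nabla d\in L^1$ makes $F$ well-defined a.e.\ and in $L^1(\mathbb R)$, while $G$ is integer-valued, measurable and locally integrable. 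At any $t$ that is simultaneously a Lebesgue point of $F$ and $G$, sending $\delta\to 0$ makes $\psi_\delta'$ concentrate at $t$, and Lebesgue differentiation produces $F(t)=G(t)$; since the set of such $t$ has full measure in $[-\e,\e]$ and the common value lies in $2\pi\mathbb Z$ by the first bullet, this completes the proof.

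The principal obstacle I anticipate is really bookkeeping rather than substance: I must ensure that for small $\delta$ the transition band $\{t-\delta<d<t+\delta\}$ really lies inside the tubular neighborhood where $d$ is smooth (so that $\phi_\delta$ is genuinely in $C^\infty_c(B^2)$), and I must track orientation conventions so that the outward normal $n_s=\nabla d$ matches the sign implicit in the slice $\langle I,d,s\rangle$. The slicing identity itself is standard, being the special case of $T(\chi(f)\,df)=\int\chi(r)\langle T,f,r\rangle(1)\,dr$ for integer rectifiable $1$-currents $T$ and Lipschitz $f$.
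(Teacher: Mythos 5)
Your proposal is correct and follows essentially the same route as the paper: you test the divergence identity with a smooth ramp of the signed distance whose gradient concentrates near the level $t$, rewrite the $V$-side as an average of boundary fluxes via the coarea formula and the $I$-side as an average of slice masses via the standard slicing identity, and conclude at a.e.\ $t$ by Lebesgue differentiation, exactly as the paper does with its mollifiers $\varphi_\e$ and primitives $\chi_\e$. Your choice of cutoff $\phi_\delta=1-\psi_\delta(d)$ is if anything slightly cleaner regarding compact support than the paper's $F_\e$ with the auxiliary function $g$.
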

\begin{proof}
We consider a family of symmetric mollifiers $\varphi_\e:\mathbb R\to\mathbb R^+$ supported in $[-\e,\e]$, and their primitives $\chi_\e(x):=\int_{-\infty}^x\varphi_\e dt$. We will consider a non negative function $g$ which is $C^\infty_c$-extensions to a neighborhood of $\de \Omega$ of the constant function equal to $1$ on all the $\Omega_t$'s with $t\in[-2\e,2\e]$, and we write the current $I$ as $(\mathcal M_I,\theta_I,\tau_I)$, where $\mathcal M_I$ is a $1$-rectifiable set supporting the current $I$, $\tau_I$ is the orienting vector of $I$ and $\theta_I$ is the multiplicity of $I$. Then the currents approximating the slice $\langle I, f, t\rangle$ (for some Lipschitz function $f:B^2\to[-2\e,2\e]$), when it exists, satisfy:
\begin{eqnarray}
I\llcorner f^\#(\varphi_\e(\cdot-t) d\tau)(g)&=&\int_{\mathcal M_I}\langle \tau_I(x), g(x)\varphi_\e(f(x)-t) df_x\rangle d\mathcal H^1(x)\label{convcurr}\\
&=& 
\int_{\mathcal M_I}\langle \tau_I(x), g(x)d(\chi_\e(\cdot - t)\circ f)_x\rangle d\mathcal H^1(x)\nonumber\\
&=&
\langle I\llcorner g,dF_\e\rangle\quad\text{where}\quad F_\e(x):=\chi_\e(f_\e(x)-t),\nonumber\\
&=&
\langle I,dF_\e\rangle\quad\text{since}\quad \op{spt}F_\e\subset\{g=1\},\nonumber\\
&=&
\int_{\{x:\:|f(x)-t|\leq\e\}} V\cdot \nabla F_\e dx^2\quad\text{(by \eqref{divcurr})}.\nonumber
\end{eqnarray}
Now we take $f(x):=\op{dist}_{\de\Omega}(x)$, obtaining that a.e. on a tubular neighborhood 
\begin{equation*}
 T(\Omega,2\e):=\cup_{-2\e\leq t\leq2\e}\de\Omega_t,
\end{equation*}
$\nabla f$ exists, and on each $\de\Omega_\tau=\{f=\tau\}$ it is a.e. equal to the unit normal vector $n_\tau$. Therefore we have
\begin{eqnarray*}
\nabla F_\e(x)&=&\nabla(\chi_\e(\cdot - t)\circ f_\e)(x)\\
&=&\varphi_\e(\cdot - t)\circ f(x) \nabla f(x) =\left[\varphi_\e(\cdot - t)\circ\op{dist}(x,\de\Omega) \right]n_{\op{dist}_{\de\Omega}(x)}
\end{eqnarray*}
and
\begin{eqnarray*}
\int_{\{|f-t|\leq\e\}} V\cdot \nabla F_\e dx^2
&=&\int_{T(\Omega,2\e)}\varphi_\e\circ\op{dist}_{\de\Omega_t}(x)\:V(x)\cdot \nabla(\op{dist}_{\de\Omega_t})(x)dx^2\\
&=&\int_{-\e}^\e\varphi_\e(t)\left(\int_{\de\Omega_t}V\cdot n_t d\mathcal H^1\right) dt.
\end{eqnarray*}
As in the usual theory of slicing, for almost all $t$'s the currents $I\llcorner f^\#(\varphi_\e(\cdot - t) d\tau)$ converge weakly to the slice $\langle I, f, t\rangle$ as $\e\to0$. Similarly, $V$ being in $L^p$, a dominated convergence argument gives also for almost all $\bar t$ the convergence 
\begin{equation}\label{convdeg}
\int_{-\e}^\e\varphi_\e(\tau - t)\left(\int_{\de\Omega_\tau}V\cdot n_{\de\Omega_\tau}d\mathcal H^1\right) dt\to\int_{\de\Omega_t}V\cdot n_{\de\Omega_{t}}d\mathcal H^1.
\end{equation}
The fact that almost all slices of an integer multiplicity rectifiable current are integer multiplicity rectifiable gives the first point of the Proposition, while the second point follows from \eqref{convcurr} and \eqref{convdeg}.
\end{proof}

\section{Further remarks concerning the Main Theorem}
We want first to point out that not all boundaries of rectifiable integral currents $\de I$ are representable as $u^*\theta$ for $u\in W^{1,p}(\Omega, S^1)$, if $p>1$, showing that this case is more subtle than the case $p=1$ treated in Theorem \ref{app41}. To do this, we use the second formulation of the Main Theorem, which says that such $u^*\theta$ would then be equal to $V^\perp$ for some vectorfield $V\in L^p$ satisfying $\op{div}V=\de I$. We will show that not all integral currents $I$ have $\de I$ equal to a divergence of a $L^p$-vectorfield.\\
 Suppose first that we have a vectorfield $V$ on $B_\e(p)$ satisfying $\op{div}V=\delta_p$ (where $\delta_p$ is the Dirac mass in $p$). Then for almost all $r\in[0,\e[$ we have 
\begin{equation}\label{degree1}
 \int_{\de B_r(p)}V\cdot n_{B_r(p)}\; d\,\mathcal H^1= 1,
\end{equation}
and we see that under the constraint \eqref{degree1}, the minimal $L^p$-mass is achieved by the radial (in polar coordinates around $p$) vectorfield 
$$
 V_{min}(\theta, r)=\frac{1}{2\pi r}\hat r
$$
(by a rearrangement argument and by the convexity of the $L^p$-norm for $p>1$). We therefore obtain (for some geopmetric constant $C$)
\begin{equation}\label{lpnormestimate}
 \|V\|_{L^p(B_\e(p))}^p\geq  \|V_{min}\|_{L^p(B_\e(p))}^p=C\e^{2-p}
\end{equation}
We see that such estimate on the norm of $V$ is only dependent on the fact that $(\op{div}V)\llcorner B_\e(p)=\delta_p$. We can now use a series of inequalities like \eqref{lpnormestimate} on a series of (disjoint) balls in order to find our counterexample.

\begin{example}\label{diffalberti}
 Take a sequence of positive numbers $(a_i)_{i\in\mathbb N}$ such that
\begin{eqnarray}
 \sup_i a_i&=&\e\nonumber\\
 \sum_{i=1}^\infty a_i&=&2\label{mass1}\\
 \sum_{i=1}^\infty a_i^{2-p}&=&=+\infty.\label{infinitest}
\end{eqnarray}
It is possible to achieve this for any $\e>0$, since $p>1$.\\

Now take a $2$-dimensional domain $\Omega$. It is possible to find a series of disjoint balls $B_i$ of radiuses $a_i$ for any sequence $a_i$ as above, provided that $\e$ is small enough (because $\mathcal H^1(\Omega)=\infty$ and for any set $C$, $\mathcal H^1(C)>0$ implies $H^{2-p}(C)=\infty$). Inside each $B_i$ one can insert two disjoint balls $B_i^+, B_i^-$ of radius $\tfrac{a_i}{2}$. Call $x_i^\pm$ the center of $B_i^\pm$, and consider the current 
$$
I=\sum_{i=1}^{\infty}[x_i^-, x_i^+].
$$
Using the estimate \eqref{infinitest} and the estimates \eqref{lpnormestimate} on the disjoint balls $B_i^\pm$, we obtain that any vectorfield satisfying $\op{div}V=\de I$ must not be in $L^p$. By our Main Theorem (second version), we see that none of the currents constructed in this way can possibly have boundary equal to the distributional Jacobian of a map $u\in W^{1,p}(\Omega,S^1)$.
\end{example}

\bibliographystyle{amsalpha}
\bibliography{integrability}
\end{document}